\newtheorem{theorem}{Theorem}
\newtheorem{definition}[theorem]{Definition}
\newtheorem{lemma}[theorem]{Lemma}
\newtheorem{corollary}[theorem]{Corollary}
\newcommand{\qedsymb}{\hfill{\rule{2mm}{2mm}}}
\renewenvironment{proof}[1][]{\begin{trivlist}
\item[\hspace{\labelsep}{\bf\noindent Proof#1:\/}] }{\qedsymb\end{trivlist}}
\def\R{\mathbb{R}}
\def\Q{\mathbb{Q}}
\newcommand{\rank}{\mathop{\mathrm{rank}}}
\newcommand{\Rbool}{{\rank}_\mathbb{B}}
\newcommand{\Rbin}{{\rank}_{\mathrm{bin}}}
\newcommand{\Rreal}{{\rank}_\mathbb{R}}
\providecommand{\keywords}[1]{\textbf{\textit{Key Words---}} #1}
\begin{document}

\title{A Study of the Binary and Boolean Rank
of Matrices with Small Constant Real Rank}

\author{
Michal Parnas\thanks{School of Computer Science, The Academic College of Tel Aviv-Yaffo, Tel Aviv 61083, Israel. Email address: {\tt michalp@mta.ac.il}}
\and
Adi Shraibman\thanks{School of Computer Science, The Academic College of Tel Aviv-Yaffo, Tel Aviv 61083, Israel. Email address: {\tt adish@mta.ac.il}}
}

\maketitle

\begin{abstract}
We initiate the study of the binary and Boolean rank of $0,1$ matrices that have a small rank over the reals.
The relationship between these three rank functions is an important open question, and here we prove
that when the real rank $d$ is a small constant,  the gap between the real and the binary and Boolean rank is a small constant.
We give tight upper and lower bounds on the Boolean and binary rank of matrices with real rank $1 \leq d \leq 4$,
as well as determine the size of the largest isolation set in each case.
Furthermore, we prove that for $d = 3,4$, the circulant matrix defined by a row with $d-1$ consecutive ones followed by $d-1$ zeros,
is the only matrix of size $(2d-2)\times (2d-2)$ with real rank $d$ and Boolean and binary rank and isolation set of size $2d-2$,
and this matrix achieves the maximal gap possible between the real and the binary and Boolean rank for these values of $d$.

Our results can also be interpreted in other equivalent terms, such as finding the minimum number of bicliques needed to partition or cover
the edges of a bipartite graph whose reduced adjacency matrix has real rank $1 \leq d \leq 4$.
We use a combination of combinatorial and algebraic techniques combined with the assistance of a computer program.


\keywords{Real rank, Boolean rank,  Binary rank,   Biclique cover number,  Biclique partition number.}

\end{abstract}

\section{Introduction}

The real (standard) rank of an $n\times m$ matrix $M$ over $\R$, denoted by $\Rreal(M)$, is an important concept in many applications and fields of mathematics and computer science
and has several equivalent definitions. The following definition is used here:
the real rank of $M$ is the minimal $d$ for which there exist real matrices $A$ and $B$ of size $n \times d$ and $d \times m$ respectively,
such that $M = A \cdot B$, where the operations are over $\R$.
In a similar way, consider the following rank functions defined over a semi-ring (see e.g. Gregory and Pullman~\cite{GregoryPullman}).
The {\em binary rank} of a $0,1$ matrix $M$ of size $n\times m$, denoted by $\Rbin(M)$, is
the minimal $d$ for which there exist $0,1$ matrices $A$ and $B$ of size $n \times d$ and $d \times m$ respectively, such that $M = A \cdot B$, where the operations are over the integers.
The {\em Boolean rank} of a $0,1$ matrix  $M$, denoted by $\Rbool(M)$, is defined similarly to the binary rank, but here the operations are under Boolean arithmetic
(namely, $0+x=x+0=x$, $1+1=1 \cdot 1 = 1$, and $x \cdot 0 = 0 \cdot x = 0$).
Such decompositions $M=A \cdot B$ are called {\em optimal} for the given rank function.
The broad interest in the Boolean and binary rank stems also from the following equivalent definitions,
and we will make use of these alternative definitions throughout the paper.

A {\em monochromatic combinatorial rectangle} in a $0,1$ matrix $M$ is a sub-matrix of $M$, all of whose entries have the same value.
The {\em partition number} of $M$ is the minimum number of monochromatic rectangles required to partition the ones of $M$, and is equal to  $\Rbin(M)$.
The {\em cover number} of $M$ is the minimum number of monochromatic rectangles required to cover the ones of $M$, where rectangles may overlap,
and  is equal to  $\Rbool(M)$ (see Gregory,  Pullman,  Jones, and  Lundgren~\cite{Gregory}).

The binary and Boolean rank  are also related to concepts in graph theory.
Given a bipartite graph $G$, the {\em biclique partition number}, $bp(G)$, and  {\em biclique cover number},  $bc(G)$,  are equal to the minimum
number of bicliques needed to  partition or cover, respectively, all edges of $G$.
Let $M$ be the reduced  adjacency matrix of $G$, that is, if $G$ has $n$ and $m$ vertices on each side, then $M$ has $n$ rows and $m$ columns
and $M_{i,j} = 1$  if and only if $(i,j)$ is an edge of  $G$, where $M_{i,j}$ is the element on the $i$'th row and $j$'th column of  $M$.
A monochromatic combinatorial rectangle of ones in $M$ corresponds to a biclique in $G$,
and, thus, $bp(G) = \Rbin(M)$ and $bc(G) = \Rbool(M)$  (see~\cite{Gregory}).

The binary and Boolean rank are also closely related to the field of communication complexity.
Here two players are given a joint matrix $M$, where one player receives a row index $i$ and the second a column index $j$,
and their goal is to determine  $M_{i,j}$ while minimizing the number of bits communicated between them in the worst case.
It was shown that $\log \Rbin(M)$  approximates the deterministic communication complexity of $M$ up to a polynomial, and determines
exactly the unambiguous nondeterministic communication complexity of $M$,
whereas $\log \Rbool(M)$ determines exactly the non-deterministic communication complexity of $M$.
Furthermore, the famous  log-rank conjecture of Lov{\'a}sz and Saks~\cite{LovaszS88} suggests that the deterministic communication complexity is polynomially related to $\log \Rreal(M)$
(see, e.g., Lovett~\cite{LovettA14}). Since the log-rank conjecture seems so far unreachable, any further understanding of the properties of the Boolean and binary rank in comparison with the real rank
can assist.

Computing the Boolean and binary rank is $NP$-complete (See  Orlin~\cite{orlin1977contentment} and Jiang and Ravikumar~\cite{jiang1993minimal}).
Chalermsook,  Heydrich,  Holm, and  Karrenbauer showed in~\cite{chalermsook2014nearly}
that it is hard in general to approximate the Boolean rank
of a matrix $M$ of size $n \times n$  to within a factor of $n^{1 - \epsilon}$ for any given $\epsilon > 0$.
Chandran, Issac and  Karrenbauer~\cite{chandran2017parameterized}
give a polynomial time algorithm with an approximation ratio of $n/\log n$ for the Boolean and binary rank.

In the framework of parameterized algorithms, Fleischner,  Mujuni, Paulusma, and  Szeider showed in~\cite{fleischner2009covering}
that the Boolean and binary rank are fixed-parameter tractable with running time $f(d)poly(n,m)$, where $d$ is the rank in question.
Gramm, Guo, H\"{u}ffner, and  Niedermeier~\cite{gramm2009data} give an exact solution for the Boolean rank with $f(d) = 2^{2^{O(d)}}$, and
Chandran, Issac and  Karrenbauer  provide in~\cite{chandran2017parameterized} an algorithm for the binary rank $d$ with $f(d) = 2^{O(2^d)}$.
Moreover,~\cite{chandran2017parameterized} prove that there is no algorithm for the Boolean rank with less than a double exponential complexity in $d$
unless the Exponential Time Hypothesis (ETH) is false, thus matching up to polynomial factors the algorithm of~\cite{gramm2009data}.

There are simple bounds on the relationship between the real rank and the binary and Boolean rank.
It always holds that $\Rreal(M) \leq \Rbin(M)$, whereas the Boolean rank can be smaller or larger than the real rank and is always bounded above by the binary rank.
The identity matrix is a trivial example where all three ranks are equal,
and the binary and Boolean rank are always equal to the real rank for real rank $1$ or $2$.
When $\Rreal(M) \geq 3$ there are examples of matrices for which all three rank functions differ (see Section~\ref{prliminaries}).

A useful tool for proving lower bounds on the binary and Boolean rank of $M$ is finding a large {\em isolation set} in $M$
(or a {\em fooling set} as it is called in communication complexity),
which is a subset of $1$ entries of $M$, such that no two of these ones belong to an all-one $2\times 2$ sub-matrix of $M$,
and no two of these ones are in the same row or column of $M$.
Thus, no two ones of an isolation set can belong to the same monochromatic rectangle,
and the size of the largest isolation set of $M$, denoted by $i(M)$, provides a lower bound on $\Rbin(M)$ and $\Rbool(M)$ (see, e.g, Beasley~\cite{BEASLEY20123469}).
However, there can be a large gap between $i(M)$ and $\Rbool(M), \Rbin(M)$.
For example, the matrix $D_n$ of size $n \times n$, $n \geq 3$, which has an all-zero main diagonal and ones elsewhere,
has $i(D_n) = 3$ whereas $\Rbool(D_n) = \Theta(\log_2 n)$ and $\Rreal(D_n) = \Rbin(D_n)= n$ (see de Caen, Gregory, and Pullman~\cite{Caen2} and Haviv and Parnas~\cite{haviv2023binary}).

\subsection{Our Results}
Our goal is to find the largest gap possible between the real rank $d$ and the Boolean and binary rank of a $0,1$ matrix $M$, for small constant values of $d$,
and to characterize these matrices from a combinatorial and algebraic point of view,
as well as understand the size of the largest isolation set possible.
It is enough to consider only {\em basic} matrices, that is, matrices that do not have all-zero rows and columns, and do not have identical rows or columns.

Using a result of de Caen, Gregory, Henson, Lundgren, and Maybee~\cite{realnonnegative} it is possible to prove the following theorem and get a multiplicative gap
of almost $2$ between the real and the binary and Boolean rank as the size of the matrix grows.
The matrices used for the proof of this theorem also have large isolation sets which are equal to the binary and Boolean rank of the matrix.

\begin{theorem}
\label{theodoublegap}
For all $n \geq 4$, $n \neq 5$, there exists a basic $0,1$ matrix $M$ of size $n \times n$, with $\Rreal(M) = \lfloor n/2\rfloor + 1$,
$\Rbin(M) = \Rbool(M) = i(M) = 2\lfloor n/2\rfloor$.
\end{theorem}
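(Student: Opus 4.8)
The plan is to exhibit, for each admissible $n$, an explicit basic matrix and then pin down all four quantities by a short chain of inequalities. Recall the general bounds $i(M)\le \Rbool(M)\le \Rbin(M)$ and $\Rreal(M)\le \Rbin(M)$. Since an arbitrary $n\times n$ $0,1$ matrix can always be partitioned into at most $n$ monochromatic rectangles (one per nonzero row, taking that row against its support), we also have the free upper bound $\Rbin(M)\le n$. Thus it suffices to produce a matrix whose real rank equals $\lfloor n/2\rfloor+1$ and which carries an isolation set of size $2\lfloor n/2\rfloor$ together with a rectangle partition of the same size; the squeeze then forces $i(M)=\Rbool(M)=\Rbin(M)=2\lfloor n/2\rfloor$.

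For even $n$, take $M=C_n$, the circulant matrix whose first row is $\lfloor n/2\rfloor$ consecutive ones followed by $\lfloor n/2\rfloor$ zeros, so that $M_{ij}=1$ iff $(j-i)\bmod n\in\{0,\dots,n/2-1\}$. First I would check that $M$ is basic: every row contains $n/2\ge 2$ ones, and since the defining pattern is not periodic with any smaller period, all cyclic shifts, hence all rows and all columns, are distinct. Next I would compute the real rank through the discrete Fourier transform: the eigenvalues of $C_n$ are $\lambda_j=\sum_{k=0}^{n/2-1}\omega^{jk}$ with $\omega=e^{2\pi i/n}$, and since $\omega^{jn/2}=(-1)^j$ one gets $\lambda_0=n/2$, $\lambda_j=0$ for every even $j\ne 0$, and $\lambda_j\ne 0$ for every odd $j$. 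Counting gives exactly $n/2+1=\lfloor n/2\rfloor+1$ nonzero eigenvalues, i.e. $\Rreal(M)=\lfloor n/2\rfloor+1$; this is also where the result of de Caen, Gregory, Henson, Lundgren and Maybee can be invoked. For the remaining quantities I would take the main diagonal $\{(i,i):1\le i\le n\}$ as the candidate isolation set: two diagonal ones $(i,i),(i',i')$ lie in a common all-one $2\times 2$ submatrix only if both $(i'-i)\bmod n$ and $(i-i')\bmod n$ fall in $\{0,\dots,n/2-1\}$, which is impossible, so this is an isolation set of size $n=2\lfloor n/2\rfloor$. Combined with the free bound, the chain $2\lfloor n/2\rfloor=i(M)\le \Rbool(M)\le \Rbin(M)\le n=2\lfloor n/2\rfloor$ closes the even case.

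For odd $n\ge 7$ I would start from the $(n-1)\times(n-1)$ matrix $C_{n-1}$ of the previous paragraph, which already has real rank $(n+1)/2$ and binary, Boolean rank and isolation set equal to $n-1=2\lfloor n/2\rfloor$, and border it by one new row $v^{\top}$, one new column $u$, and a corner entry $w$ to reach size $n\times n$. The three requirements are: (a) the augmented matrix stays basic (the new row and column are nonzero and differ from all existing ones and from each other); (b) the real rank does not grow, which by a Schur-complement argument amounts to choosing $0,1$ vectors $u,v$ lying respectively in the column space and the row space of $C_{n-1}$, with $w$ equal to the forced value; and (c) the new ones can be absorbed so that the whole matrix is still partitioned into $n-1$ rectangles, whence $\Rbin(M)\le n-1$, while the inherited isolation set of $C_{n-1}$ gives $i(M)\ge n-1$, squeezing everything to $n-1$.

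The main obstacle is precisely this odd case: conditions (b) and (c) pull in opposite directions, since staying in the low-dimensional row and column space of $C_{n-1}$ severely restricts the admissible $0,1$ borders, whereas keeping the rectangle count at $n-1$ forces the support of the new row and column to mesh exactly with an existing optimal partition. I expect to resolve this by choosing the border to replicate the support of a carefully selected row and column of $C_{n-1}$ (so that its ones extend existing rectangles rather than demanding new ones) and using the corner entry $w$ to restore distinctness; verifying that this simultaneously preserves the real rank is the delicate computation. It is exactly this interaction, together with the fact that $C_4$ is too small to admit such a compatible border, that I expect to account for the exclusion of $n=5$. The even case, by contrast, is essentially immediate once the diagonal isolation set and the Fourier computation are in place.
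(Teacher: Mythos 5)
Your even case is correct and matches the paper's: same matrix $C_n$, same diagonal isolation set, same row-by-row partition into $n$ rectangles. Your Fourier computation of the spectrum (nonzero eigenvalues exactly at $j=0$ and all odd $j$, giving rank $n/2+1$ since circulants are diagonalizable) is a legitimate self-contained substitute for the citation to de Caen et al.\ that the paper uses, and the squeeze $2\lfloor n/2\rfloor = i(M)\le \Rbool(M)\le \Rbin(M)\le n$ is sound.

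The odd case, however, is where the theorem's actual content lies, and there you have a genuine gap: you state requirements (a)--(c) for the border $(u,v,w)$ of $C_{n-1}$ but never exhibit vectors satisfying them, explicitly deferring ``the delicate computation.'' Worse, the resolution you propose --- choosing the new row to \emph{replicate the support of an existing row} of $C_{n-1}$ and ``using the corner entry $w$ to restore distinctness'' --- provably cannot work: by Lemma~\ref{zeroequalrow}, if you append to a basic matrix a row identical to an existing row (or all-zero) and then add a column, any basic result has \emph{strictly larger} real rank, contradicting your requirement (b). The paper's construction (Lemma~\ref{Realkplus1Bin2k1}) is structurally different: the new row is the \emph{sum} of rows $1$ and $k+1$ (the all-one vector, not a replicate of any row), and the new column is the signed combination $Y_{2k+1}=Y_{2k}-Y_{2k-1}+Y_{2k-2}$ of three columns, which happens to be a $0,1$ vector precisely when $k\ge 3$. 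The rectangle count stays at $2k$ not because the new ones ``extend existing rectangles,'' but because of the $2$-sum trick (Lemma~\ref{sumoftworows}): rows $1$, $k+1$, and the new all-one row are jointly partitioned into only $2$ rectangles, freeing the rectangle needed for the extra row. Finally, your speculation that $n=5$ fails because ``$C_4$ is too small to admit a compatible border'' understates the truth: the paper proves (Lemma~\ref{lemma3}) that \emph{no} basic $5\times 5$ $0,1$ matrix of real rank $3$ exists at all, so every construction, not just this bordering scheme, is ruled out for $n=5$.
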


The even sized matrices used in the proof of Theorem~\ref{theodoublegap} are the circulant matrices $C_n$ of size $n \times n$ defined by a row with $n/2$ consecutive ones followed by $n/2$ zeros.
The odd sized matrices are constructed by taking  an even sized matrix $C_n$ and adding to it a carefully selected row and column
which are a linear combination of the rows and columns of $C_n$.
Note that the theorem does not hold for $n = 5$, and as we prove,
there does not exist a basic $0,1$ matrix of size $5 \times 5$ and real rank $\lfloor 5/2\rfloor + 1 = 3$ (see Lemma~\ref{lemma3}, Section~\ref{Sec3}).

By taking any of the matrices $M$ given in Theorem~\ref{theodoublegap} and computing the Kronecker product of $M$ with itself $s$ times,
it is possible to amplify the gap and get a matrix $M^{\otimes s}$ with real rank $(\lfloor n/2\rfloor + 1)^s$ and
binary rank $(2\lfloor n/2\rfloor)^{s}$ since $i(M) =  2\lfloor n/2\rfloor$.
See also Dietzfelbinger,  Hromkovi{\v{c}} and Schnitger~\cite{dietzfelbinger1996comparison} who proved a similar result for  $n = 4$.
Friesen, Hamed,  Lee and Theis show in~\cite{friesen2015fooling} that for any $d \geq 1$ there exists a matrix $M_d$ over $\Q$,
such that $\Rreal(M_d) = d$ and $M_d$ has an isolation set of size $\binom{d+1}{2}$.
Shigeta and  Amano~\cite{shigeta2015ordered} construct $0,1$ matrices of size $n \times n$ with real rank $n^{1/2 + o(1)}$ and an isolation set of size $n$.

It is well known that the number of distinct rows and columns of a $0,1$ matrix $M$ is bounded above by $2^{\Rreal(M)}$ (see, for example, Hrube\v{s}~\cite{hrubevs2024hard}).
This gives a trivial upper bound of $2^{\Rreal(M)}$ on the Boolean and binary rank of a matrix as a function of the real rank.
But for matrices with small constant real rank we prove that the gap between the real and the binary and Boolean rank is considerably smaller,
and the result in Theorem~\ref{theodoublegap} is tight.

\begin{theorem}
\label{theoremLowerBound}
Let $M$ be a $0,1$ matrix with $\Rreal(M) = d$. For $d = 3,4$ it holds that $d \leq \Rbin(M) \leq 2d-2$ and $d-1 \leq \Rbool(M), i(M) \leq 2d-2$.
\end{theorem}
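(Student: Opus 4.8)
The plan is to reduce everything to two core inequalities: the upper bound $\Rbin(M)\le 2d-2$ and the lower bound $i(M)\ge d-1$. We may assume $M$ is basic, since deleting zero rows and columns and merging duplicate rows and columns changes none of $\Rreal,\Rbin,\Rbool,i$. The chain $i(M)\le\Rbool(M)\le\Rbin(M)$ (the first inequality from the definition of an isolation set, the second since every partition is a cover) then does most of the bookkeeping: once $\Rbin(M)\le 2d-2$ is known, the upper bounds on $\Rbool(M)$ and $i(M)$ follow immediately, and the lower bound $d\le\Rbin(M)$ is just the standard inequality $\Rreal(M)\le\Rbin(M)$ quoted in the preliminaries.

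For the lower bound $d-1\le\Rbool(M)$ I would argue through a general inclusion--exclusion estimate rather than through isolation sets. If $\Rbool(M)=r$, write the ones of $M$ as a union of $r$ all-ones rectangles and record, for each row $i$ and column $j$, the sets $S_i,T_j\subseteq[r]$ of rectangles containing them, so that $M_{ij}=1$ exactly when $S_i\cap T_j\neq\emptyset$. Expanding $\prod_{k=1}^r(1-a_{ik}b_{jk})$, where $a_{ik},b_{jk}$ are the indicators of $k\in S_i$ and $k\in T_j$, as a sum of $2^r$ rank-one matrices over $\R$ and subtracting from the all-ones matrix shows $\Rreal(M)\le 2^{r}-1$. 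Hence $\Rbool(M)\ge\lceil\log_2(d+1)\rceil$, which equals $d-1$ precisely for $d=3,4$. The separate lower bound $d-1\le i(M)$ does not follow from this, since the matrix $D_n$ shows $i$ can be tiny while $\Rreal$ is large, so here I would instead prove the contrapositive structural statement that a basic matrix with $i(M)\le d-2$ has real rank at most $d-1$. Concretely this means showing $i(M)=1\Rightarrow\Rreal(M)\le 2$ and $i(M)=2\Rightarrow\Rreal(M)\le 3$; both are finite combinatorial case analyses exploiting the strong closure property that any two non-conflicting ones must complete to an all-ones $2\times 2$ submatrix.

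For the upper bound $\Rbin(M)\le 2d-2$ the key enabling fact is that a basic $0,1$ matrix of real rank $d$ has at most $2^d$ distinct rows and at most $2^d$ distinct columns, so it has size at most $2^d\times 2^d$ and there are only finitely many such matrices. The plan is to enumerate this finite family --- fixing a set of $d$ independent columns as a basis, generating the admissible $0,1$ columns lying in their real span, and filtering for real rank exactly $d$ --- after first cutting the search space drastically using the symmetries that preserve all four quantities (row and column permutations, transposition, and, where it helps, complementation). For each surviving matrix one then exhibits an explicit partition of its ones into at most $2d-2$ monochromatic rectangles; this construction, together with the verification that the bound is never exceeded, is where the computer program is used.

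The main obstacle is the upper bound in the case $d=4$, where the matrices can be as large as $16\times 16$ and a naive enumeration is hopeless: the work is in organizing the search so that the algebraic rank-$4$ constraint and the symmetry reductions keep the number of genuinely distinct cases manageable, and in producing the size-$(2d-2)$ partitions systematically rather than case by case. The structural lemma behind $i(M)\ge 3$ for $d=4$ is the other delicate point, since ruling out real rank $4$ from isolation number $2$ requires a careful enumeration of the possible patterns of ones.
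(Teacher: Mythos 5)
Your overall skeleton matches the paper's: reduce to basic matrices (equivalently the kernel), run the chain $i(M)\le \Rbool(M)\le \Rbin(M)$ together with $\Rreal(M)\le \Rbin(M)$, and settle the upper bound by a finite, computer-assisted verification. The one genuine gap is in your lower bound $i(M)\ge d-1$. You propose to prove the contrapositive statements $i(M)\le 1\Rightarrow \Rreal(M)\le 2$ and $i(M)\le 2\Rightarrow \Rreal(M)\le 3$ by ``finite combinatorial case analyses,'' but matrices with $i(M)\le 2$ are not a priori of bounded size --- boundedness of the kernel is a consequence of the rank bound you are trying to establish, so the finiteness claim is circular as stated (compare $D_n$, which is basic, arbitrarily large, and has $i(D_n)=3$). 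What this step actually is, is Beasley's lemma (Lemma~\ref{lemmaBeasley} in the paper): for $k=1,2$, $i(M)=k$ if and only if $\Rbool(M)=k$. That is a cited theorem with a genuine proof, not a finite check. Once you have it, your own machinery closes the gap: $i(M)\le 2$ gives $\Rbool(M)\le 2$, and your inclusion--exclusion bound $\Rreal(M)\le 2^{\Rbool(M)}-1$ gives $\Rreal(M)\le 3$. So the statement you want is true and your tools compose to prove it, but you must either cite Beasley or supply a real argument for that equivalence.

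The rest is correct but routed differently, in one place more elegantly and in one place less sharply than the paper. Your bound $\Rreal(M)\le 2^{r}-1$ for $\Rbool(M)=r$ (the empty-set term of the expansion cancels against the all-ones matrix) is a clean generalization of the paper's ad hoc counting, which for $d=4$ merely observes that a two-rectangle cover permits at most three distinct nonzero rows; both yield $\Rbool(M)\ge\lceil\log_2(d+1)\rceil=d-1$ exactly for $d=3,4$. For the upper bound, however, your plan --- enumerate all basic rank-$d$ matrices of size up to $2^d\times 2^d$ and exhibit a partition into at most $2d-2$ rectangles for each --- is valid in principle but leaves unresolved precisely the feasibility problem you flag. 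The paper's route is structurally sharper: it proves that no basic $5\times 5$ matrix of real rank $3$ exists (entirely by hand, via the seven $3\times 3$ representatives of Figure~\ref{fig:rank3}), and, by growing matrices incrementally from $d\times d$ representatives with equivalence pruning, that no basic $8\times 8$ matrix of real rank $4$ exists. Hence the kernel has at most $4$ (resp.\ $7$) rows or columns, far below $2^d$, and the binary rank bound then follows from the trivial one-rectangle-per-row bound combined with the $2$-sum property (Lemmas~\ref{sumoftworows} and~\ref{sumofrowaugmented}), rather than from computing optimal partitions of up-to-$16\times 16$ candidates. This organization is what makes the search terminate, keeps the $d=3$ case computer-free, and yields the structural by-products (Theorem~\ref{Rank3Structure}, the classification in Lemma~\ref{characterize6}, and tightness via $C_4$ and $C_6$) that a flat enumerate-and-check would not.
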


The bounds given in Theorem~\ref{theoremLowerBound} are tight: there exist matrices whose binary and Boolean rank match the lower bounds given,
and others for which the upper bound is the correct value.
The proof of Theorem~\ref{theoremLowerBound} reveals other interesting facts about matrices with real rank $d = 3,4$.
Specifically, we show that there does not exist a basic $0,1$ matrix of size $5 \times 5$ and real rank $3$.
For $d = 4$ we prove that any basic $0,1$ matrix of size $7 \times 7$ and real rank $4$ has binary rank at most $6$,
and there does not exist a basic $0,1$ matrix of size $8 \times 8$ with real rank $4$.

It is interesting to note  that Shitov~\cite{shitov2014upper} proved that if $M$ is a non-negative matrix of size $n \times n$, $n > 6$,
and $\Rreal(M) = 3$, then the non-negative rank of $M$ is at most $6n/7$.
Shitov also showed that there exist non-negative matrices of size $n \times n$ with real rank $3$ and non-negative rank $n$, for $n = 3,4,5,6$.

A useful concept in our proofs and analysis is the {\em kernel} of $M$,
which we define as a sub-matrix of $M$ achieved by deleting all zero rows and columns and all duplicate rows and columns of $M$.
Studying the structure of the kernel of a matrix $M$ of real rank $3$ allows us to prove the following.

\begin{theorem}
\label{Rank3Structure}
Let $M$ be a $0,1$ matrix with $\Rreal(M) = 3$. Then:
\begin{itemize}
\item
The kernel of $M$ has at most $4$ rows or columns.
\item
$\Rbin(M) = \Rbool(M) = 4$  if and only if the kernel of $M$ includes $C_4$ as a sub-matrix.
\item
$i(M) = \Rbin(M) = \Rbool(M)$  unless the kernel of $M$ is the following sub-matrix, in which case $\Rbin(M) = 3, \Rbool(M) = i(M) = 2$:
$$
 \left(
  \begin{array}{ccc}
    1 & 1 & 0 \\
    0 & 1 & 1 \\
    1 & 1 & 1 \\
  \end{array}
\right)
$$
\end{itemize}
\end{theorem}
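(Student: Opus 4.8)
The plan is to reduce the three quantities $\Rbin(M)$, $\Rbool(M)$, and $i(M)$ to the corresponding quantities of the kernel, then bound the kernel so that the remaining analysis is a finite check. I would begin by noting that deleting zero rows and columns changes none of the three quantities (such lines contain no ones), and that duplicating a row or column also leaves all three unchanged: a biclique cover or partition of the kernel extends to $M$ by repeating on each duplicate the rectangles used on its representative, and no isolation set can contain two ones lying in identical rows (or columns), since these would span an all-one $2\times 2$ sub-matrix. Hence $\Rbin(M)=\Rbin(K)$, $\Rbool(M)=\Rbool(K)$, and $i(M)=i(K)$, where $K$ is the kernel, a basic matrix with $\Rreal(K)=3$. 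All three quantities are also invariant under transposition, which I would use freely.

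For the first item I would prove the sharper claim that no basic rank-$3$ matrix has both at least $5$ rows and at least $5$ columns. Fixing three columns $c_{j_1},c_{j_2},c_{j_3}$ that form a basis of the column space, every row is determined by its three entries in these columns, so the rows inject into $\{0,1\}^3\setminus\{0\}$; let $L$ be the resulting set of row-labels, with $|L|$ the number of rows. Dually, every column equals $\sum_i\alpha_i c_{j_i}$ for a unique $\alpha$, and being a $0,1$ vector forces $\langle\alpha,\ell\rangle\in\{0,1\}$ for all $\ell\in L$; distinct columns give distinct $\alpha$, so the number of columns is at most $|V(L)|-1$, where $V(L)=\{\alpha\in\R^3:\langle\alpha,\ell\rangle\in\{0,1\}\ \forall\,\ell\in L\}$. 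The key computation is that every spanning subset $L\subseteq\{0,1\}^3\setminus\{0\}$ with $|L|=5$ satisfies $|V(L)|\le 5$; since a matrix with at least $5$ rows has such a spanning $5$-subset among its labels and since $V(L)\subseteq V(L')$ whenever $L'\subseteq L$, this forces at most $4$ columns. The case check behind the bound is small once one works up to the $S_3$-action permuting coordinates, and it is the main technical step of the theorem.

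With the first item in hand, I may assume after transposing that $K$ has at most $4$ columns, while the label argument bounds its rows by $7$, so $K$ ranges over a finite, explicitly describable family of basic rank-$3$ matrices. The forward direction of the second item is then immediate: if $K$ contains $C_4$ as a sub-matrix, then the main diagonal of $C_4$ is an isolation set of size $4$ that is also an isolation set of $K$, so $4\le i(K)\le\Rbool(K)\le\Rbin(K)$, and Theorem~\ref{theoremLowerBound} gives $\Rbin(K)\le 4$, whence $i(K)=\Rbool(K)=\Rbin(K)=4$. The converse direction, together with the third item, I would settle by enumerating the finite family above and computing $i$, $\Rbin$, and $\Rbool$ for each representative up to row/column permutation and transposition; this is the step carried out with computer assistance. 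The outcome to be verified is that every member not containing $C_4$ has $i(K)=\Rbin(K)=\Rbool(K)=3$, with the single exception of the displayed $3\times 3$ kernel, for which a direct check exhibits the cover of size $2$, the matching isolation set of size $2$, and the partition of size $3$.

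The principal obstacle is the bound in the first item: the reduction to the kernel and the forward direction of the second item are routine, and once the dimensions are bounded the remaining claims reduce to a finite verification. The delicate point is to carry out the $V(L)$ computation cleanly, making sure that all spanning $5$-subsets are covered and that the passage from ``at least $5$ rows'' to ``at most $4$ columns'' correctly invokes the monotonicity $V(L)\subseteq V(L')$ for $L'\subseteq L$ together with the fact that the labels of a rank-$3$ matrix span $\R^3$.
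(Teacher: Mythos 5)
Your proposal is correct, but it reaches the theorem by a genuinely different route than the paper. The paper's engine is Lemma~\ref{lemma3}: every rank-$3$ matrix contains a $3\times 3$ rank-$3$ sub-matrix equivalent to one of seven representatives $A_1,\dots,A_7$, and a hand case analysis of the admissible linear combinations of rows (with their coefficient constraints, via Lemma~\ref{sumofrowaugmented}) shows none extends to a basic $5\times 5$ rank-$3$ matrix; the proof of Theorem~\ref{Rank3Structure} then reuses exactly this inventory of row extensions and compatible columns to classify, by hand, which kernels arise, where $C_4$ appears, and what the maximal isolation sets are --- deliberately computer-free for $d=3$ (the paper reserves machine search for $d=4$). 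You instead prove the finiteness bound by a duality argument: labeling rows by their restrictions to three basis columns, so that rows inject into $\{0,1\}^3\setminus\{0\}$ and columns inject into $V(L)\setminus\{0\}$ with $V(L)=\{\alpha:\langle\alpha,\ell\rangle\in\{0,1\}\ \forall \ell\in L\}$, and verifying $|V(L)|\le 5$ for spanning $5$-subsets $L$. That key claim is in fact true: up to the $S_3$-action there are six orbit types of $5$-subsets (classified by their $2$-element complements among the seven nonzero vectors), every $5$-subset spans since no plane through the origin contains more than three of the seven vectors, and a short computation gives $|V(L)|\in\{4,5\}$ in each case; your monotonicity $V(L)\subseteq V(L')$ for $L'\subseteq L$, injectivity of the column-to-$\alpha$ map, and the exclusion of $\alpha=0$ then force at most $4$ columns, recovering Lemma~\ref{lemma3} more conceptually and with a smaller case check. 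Your explicit kernel reduction ($\Rbin$, $\Rbool$ and $i$ all invariant under deleting zero and duplicate lines) is left implicit in the paper and is worth spelling out as you do, and your forward direction of the second item matches the paper's (isolation set of size $4$ from the diagonal of $C_4$, upper bound $4$ from Theorem~\ref{theoremLowerBound}, which is not circular since its $d=3$ case rests only on Lemma~\ref{lemma3}). The trade-off lies in the converse and the third item: where the paper classifies the four-row kernels by hand from the $A_i$ extension analysis, you bound the kernel by $7\times 4$ up to transpose and delegate the classification to a finite machine enumeration; this is sound, and the outcome you predict --- all three parameters equal to $3$ for every kernel not containing $C_4$, except the displayed kernel with $\Rbin=3$ and $\Rbool=i=2$ --- agrees with the paper's findings, but it trades the paper's fully human-verifiable structure theory for rank $3$ for a computation.
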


Our proofs of Theorems~\ref{theoremLowerBound} and~\ref{Rank3Structure}, as mentioned earlier, give additional interesting insights as to the structure of basic $0,1$ matrices
with real rank $d = 3,4$. 
We characterize the binary and Boolean rank and the size of largest isolation sets of basic $0,1$ matrices of size $6 \times 6$ and real rank $4$
(see Lemma~\ref{characterize6}, Section~\ref{App4}).
Moreover, we prove that the only $0,1$  matrix of size $n \times n$, for $n = 4,6$, with
$\Rreal(M) =   1+ n/2 $,  $\Rbin(M) = \Rbool(M) = i(M) = n$, is the matrix $C_n$ (see Corollary~\ref{C4} and Corollary~\ref{C6}).
The proof of Theorem~\ref{theoremLowerBound} shows that the bounds in Theorem~\ref{theodoublegap} are tight also for $0,1$ matrices of size $7 \times 7$,
and that the kernel of a $0,1$ matrix $M$ with $\Rreal(M) = 4$ has at most $7$ rows or columns.

Our results give tight bounds on $bp(G)$ and $bc(G)$ for a bipartite graph $G$ whose reduced adjacency matrix $M$ has real rank $3$ or $4$.
The kernel of the matrix $M$ corresponds to the kernel of the graph $G$ as used in~\cite{fleischner2009covering} in the context of parameterized algorithms.
There the kernel is a sub-graph of $G$ obtained by removing all vertices with no neighbors  (which correspond to all-zero rows or columns in $M$),
and removing all vertices but one from any set of vertices with the same neighbor set (which corresponds to removing duplicate rows and columns in $M$).
Thus, the number of rows and columns of the kernel of $M$ is the same as the number of vertices in the kernel of $G$.
As stated in~\cite{fleischner2009covering}, the number of vertices on each side of the kernel is at most $2^{bp(G)}$,
thus achieving a kernel with at most $2^{bp(G)+1}$ vertices in total.
As our results show, for a reduced adjacency matrix $M$ with real rank $d = 3,4$, the number of vertices in the kernel of $G$ is significantly smaller.
Translating Theorems~\ref{theoremLowerBound} and~\ref{Rank3Structure} into the language of graphs, and noticing that
the circulant matrix $C_4$ is the reduced adjacency matrix of a cycle of length $8$, we get the following for real rank $3$:

\begin{corollary}
Let $G $ be a bipartite graph whose reduced adjacency matrix $M$ has real rank $3$. Then:
\begin{itemize}
\item
 $3 \leq bp(G) \leq 4$ and $2 \leq bc(G)\leq 4$.
\item
 $bp(G) = bc(G)=4$  if and only if the kernel of $G$ has a cycle of length $8$ as a subgraph.
\item
$bp(G) = bc(G)$ unless the kernel of $G$ is a cycle of length $6$ with a chord bisecting the cycle.
 \item
 The kernel of $G$ has at most $4$ vertices on one of its sides.
\end{itemize}
\end{corollary}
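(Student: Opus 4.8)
The plan is to derive the corollary as a direct translation of Theorems~\ref{theoremLowerBound} and~\ref{Rank3Structure} into the language of bicliques, using the dictionary established in the introduction. Recall that $bp(G) = \Rbin(M)$ and $bc(G) = \Rbool(M)$, and that the kernel of $G$ (obtained by deleting isolated vertices and keeping one vertex from each class of vertices with identical neighborhoods) corresponds precisely to the kernel of $M$ (obtained by deleting zero rows and columns and duplicate rows and columns), with the number of vertices on each side of the kernel of $G$ equal to the number of rows and of columns, respectively, of the kernel of $M$. Since $\Rreal(M)=3$, all hypotheses of Theorems~\ref{theoremLowerBound} and~\ref{Rank3Structure} hold with $d=3$.

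With this dictionary in place, the first and last bullets are immediate. The bounds $3 \le bp(G) \le 4$ and $2 \le bc(G) \le 4$ are exactly $3 \le \Rbin(M) \le 4$ and $2 \le \Rbool(M) \le 4$, obtained by specializing Theorem~\ref{theoremLowerBound} to $d=3$. The claim that one side of the kernel of $G$ has at most $4$ vertices is the first item of Theorem~\ref{Rank3Structure}, that the kernel of $M$ has at most $4$ rows or columns, read through the side-size correspondence.

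For the two middle bullets I would first pin down the graph realizations of the two distinguished matrices. A short check shows that $C_4$ (first row $1,1,0,0$, circulant) is the reduced adjacency matrix of a cycle of length $8$: its eight ones are exactly the eight edges of the cycle $r_1\,c_1\,r_4\,c_4\,r_3\,c_3\,r_2\,c_2\,r_1$. Likewise, the exceptional $3\times 3$ matrix of Theorem~\ref{Rank3Structure} is, up to permutation of rows and columns, the reduced adjacency matrix of a $6$-cycle together with a chord joining two antipodal vertices: its seven ones split into a $6$-cycle and one bisecting chord, and since isomorphism of bipartite graphs coincides with permutation-equivalence of reduced adjacency matrices this identification is exact. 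Given these, the second bullet becomes the second item of Theorem~\ref{Rank3Structure} ($\Rbin(M)=\Rbool(M)=4$ if and only if the kernel contains $C_4$), and the third bullet becomes the last item: in the exceptional case $\Rbin(M)=3\neq 2=\Rbool(M)$, so $bp(G)\neq bc(G)$, while in every other case $i(M)=\Rbin(M)=\Rbool(M)$ forces $bp(G)=bc(G)$.

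The only genuine obstacle is to check that the matrix conditions translate faithfully into the subgraph conditions in both directions. The forward direction is routine: an exact $C_4$ submatrix yields a cycle of length $8$ as a subgraph, and an occurrence of the exceptional matrix as the kernel identifies the kernel of $G$ with the chorded $6$-cycle. The delicate direction is the reverse implication of the second bullet, since a cycle of length $8$ appearing merely as a subgraph gives a $4\times 4$ block of $M$ that a priori only dominates $C_4$ entrywise, whereas the theorem demands an exact $C_4$ submatrix. Here I would invoke the rigidity supplied by Theorem~\ref{Rank3Structure} and its proof: the small side of the kernel has at most $4$ vertices, so a cycle of length $8$ must occupy that entire side, and the rank-$3$ constraint together with the classification of admissible rank-$3$ kernels leaves no room for additional ones in the block, forcing it to equal $C_4$ exactly. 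Combining the two directions closes both biconditionals and completes the proof.
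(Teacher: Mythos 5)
Your dictionary ($bp(G)=\Rbin(M)$, $bc(G)=\Rbool(M)$, kernel of $G$ matching kernel of $M$) and your handling of the first, third, and fourth bullets coincide with the paper, which states this corollary purely as a translation of Theorems~\ref{theoremLowerBound} and~\ref{Rank3Structure} together with the two identifications you verify (that $C_4$ is the reduced adjacency matrix of an $8$-cycle, and that $A_7$ is that of a $6$-cycle with a bisecting chord). You also correctly isolate the one genuinely delicate point: an $8$-cycle occurring merely as a \emph{subgraph} only produces a $4\times 4$ block that dominates $C_4$ entrywise, while the theorem requires an exact $C_4$ sub-matrix. The problem is that your resolution of this point is wrong. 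The claimed rigidity --- that the rank-$3$ constraint and the classification of rank-$3$ kernels ``leave no room for additional ones in the block, forcing it to equal $C_4$ exactly'' --- is false, and a counterexample appears inside the paper's own proof of Theorem~\ref{Rank3Structure} (Case 7): the basic matrix
$$ M=\left(
  \begin{array}{cccc}
    1 & 1 & 0 & 0\\
    0 & 1 & 1 & 0\\
    1 & 1 & 1 & 1\\
    0 & 0 & 1 & 1\\
  \end{array}
\right) $$
has $\Rreal(M)=3$ (row $3$ is the sum of rows $1$ and $4$), is its own kernel, and the paper shows $\Rbin(M)=\Rbool(M)=i(M)=3$. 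Yet its bipartite graph contains the $8$-cycle $r_1\,c_1\,r_3\,c_4\,r_4\,c_3\,r_2\,c_2\,r_1$ as a (non-induced, spanning) subgraph: all eight of these edges are ones of $M$. So the kernel of $G$ has a cycle of length $8$ as a subgraph while $bp(G)=bc(G)=3\neq 4$, and the reverse implication of the second bullet, as you formulate it, simply fails.

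The upshot is that the statement is only correct when ``has a cycle of length $8$ as a subgraph'' is read as an \emph{induced} subgraph of the kernel of $G$: choosing $4$ vertices on each side of the kernel corresponds to choosing a $4\times 4$ sub-matrix of the kernel of $M$ with all its entries fixed, so an induced $8$-cycle is exactly an occurrence of $C_4$ as a sub-matrix, and under that reading both directions of the biconditional are immediate translations --- which is all the paper does, its phrasing inheriting the same looseness you tried (and could not, because it is false) to repair. Your proof would be fixed by replacing the last paragraph with this observation, rather than by any rigidity argument.
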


Finally, our results can also give efficient approximation algorithms up to a small additive constant for the binary and Boolean rank when $1 \leq \Rreal(M) \leq 4$.
The approximation algorithm can simply compute the real rank of the kernel of the matrix and output it as an approximation to the binary and Boolean rank,
where for $\Rreal(M) = 3$ it can even determine the binary and Boolean rank exactly by determining the structure of the kernel as described in Theorem~\ref{Rank3Structure}.

We conclude with a few open problems. First, what is the maximal real rank $d$ for which the bound in Theorem~\ref{theoremLowerBound} holds,
that is, the binary rank is at most $2d-2$? We also wonder how far can a computer search assist us here.
We have preliminary results that there exist basic $0,1$ matrices of size $11 \times 11$ with real rank $d = 5$.
However, the binary rank of the matrices we found so far is at most $8 = 2d-2$.
Is $8$ the largest binary rank possible for real rank $5$ as exists in the matrix $C_8$, and is there a $12 \times 12$ basic $0,1$ matrix with real rank $5$?
Another interesting problem is to determine for what  $n$ is $C_n$ the only matrix of size $n \times n$ with
$\Rreal(M) =   1+ n/2 $,  $\Rbin(M) = \Rbool(M) = i(M) = n$.

\subsection{Our techniques}
\label{IntroTechniques}
We prove our results using both algebraic and combinatorial techniques combined with a computer program which assisted us in achieving some of the theoretical results.
Since the number of possible matrices we had to examine grows exponentially with the size of the matrix,
we use the following algorithmic ideas to prune the number of options considered by the computer program:
\begin{enumerate}
\item
It is enough to consider basic $0,1$ matrices. This reduces significantly the number of matrices we had to consider.
\item
A $0,1$ matrix has the {\em $2$-sum property} if it has two rows (columns) whose sum equals a third row (column).
The binary rank of such a matrix cannot be full (see Lemma~\ref{sumoftworows}),
and thus, we can remove all these matrices when looking for a matrix with full binary rank.
\item
Two matrices will be called {\em equivalent} if they are identical up to a permutation of the rows and columns or the transpose operation.
Our algorithm will consider only one matrix from each class of equivalent matrices, and this matrix will be a {\em representative} for this class.
\item
Each row of a matrix was labeled by an integer which corresponds to the binary number represented by this row vector.
This allowed us to sort the rows of each matrix, and then remove efficiently matrices which are identical up to a permutation of the rows. The same can be done for the columns.
\end{enumerate}
Our proof of Theorem~\ref{theoremLowerBound} for matrices with real rank $d = 3$ uses solely algebraic and combinatorial ideas.
The main ingredient is proving that there does not exist a basic $0,1$ matrix of size $5 \times 5$ with real rank $3$.
This can be done by using the fact that each matrix with real rank $3$ must include a $3 \times 3$ sub-matrix of real rank $3$,
and as the number of possible representatives of matrices of size $3 \times 3$ and real rank $3$ is relatively small,
it is possible to go over each such representative and prove that it can not be extended to a $5 \times 5$ basic $0,1$ matrix with real rank $3$.

However, when  $d = 4$ the number of representatives of matrices of size $4 \times 4$ and real rank $4$ becomes significantly larger.
Therefore, we used the algorithmic ideas described above to write a computer program that checks what is the largest $k$
for which there exists a basic $0,1$ matrix of size $k \times k$ with real rank $d$, and for such matrices get an upper bound on their binary rank, using combinatorial and algebraic arguments.
The program starts by considering all basic $0,1$ matrices of size $d \times d$ and real rank $d$,
and then prunes this set of matrices as explained above to get just one representative of each class of equivalent matrices.
Then the program tries to add additional rows and columns to each representative,
so as to get larger matrices while not increasing the real rank.
In each step when a new class of larger matrices is found, the matrices are pruned again so that only the new representatives remain
and carried over to the next step.
For $d = 4 $ we show that there are no basic $0,1$ matrices of size $8 \times 8$ with real rank $4$, and prove that any basic matrix with at most
$7$ rows or columns and real rank $4$ has binary rank at most $6$.

\section{Preliminaries}
\label{prliminaries}

We now state a few simple lemmas  used throughout the paper, and also prove Theorem~\ref{theodoublegap}.
Since identical rows and columns and all-zero rows and columns do not change the rank,
it is enough to consider {\em basic} matrices, and for any matrix which is not basic, consider its {\em kernel} as defined in the introduction.

\begin{lemma}
\label{basic_large_a}
Let $M$ be a basic $0,1$ matrix of size $n \times m$ with $\Rreal(M) = d$.
Then every sub-matrix $A$ of $M$ such that $\Rreal(A) = d$, is basic.
\end{lemma}
\begin{proof}
Recall the following simple fact: If $X$ is an $n \times d$ matrix whose columns are independent, and  $\alpha_1, \alpha_2 \in \mathbb{R}^d$ are two vectors,
then  $X\alpha_1 = 0$ if and only if $\alpha_1 = 0$, and $X\alpha_1 = X\alpha_2$ if and only if $\alpha_1 = \alpha_2$.
From this follows the observation that a $0,1$ matrix $M'$ is basic if and only if for every optimal decomposition $M' = X \cdot Y$,
both $X$ and $Y$ do not contain all-zero or identical rows and columns, respectively.

Next, let $M = X\cdot Y$ be an optimal decomposition of $M$, and let $A$ be a sub-matrix of $M$ with $\Rreal(A) = d$.
Let $X'$ contain the rows of $X$ that correspond to the rows of $A$, and similarly
let $Y'$ contain the corresponding columns of $Y$. Since the rank of $A$ is $d$,
it follows, by definition, that $X'\cdot Y'$ is an optimal decomposition of $A$. Hence, by the above observation, $A$ is basic,
as obviously both $X'$  and $Y'$ do not contain all-zero or identical rows or columns.
\end{proof}

\begin{lemma}
\label{basic_large}
Let $M$ be a basic $0,1$ matrix of size $n \times m$ with $\Rreal(M) = d$.
Then for any $s,t$ such that $d \leq s \leq n, d \leq t \leq m$, there exists a basic sub-matrix $A$ of $M$ of size $s \times t$ such that $\Rreal(A) = d$.
\end{lemma}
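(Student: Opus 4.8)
The plan is to reduce the statement to a pure rank-existence question and then lean on Lemma~\ref{basic_large_a}. That lemma already guarantees that \emph{any} sub-matrix $A$ of $M$ with $\Rreal(A) = d$ is automatically basic, so it suffices to exhibit a sub-matrix of the prescribed size $s \times t$ whose real rank equals $d$; its being basic then comes for free. In other words, I would spend no effort directly controlling distinctness of rows or columns of $A$, and instead only worry about pinning down its real rank and its dimensions.

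First I would recall the standard linear-algebra fact that a matrix of rank $d$ possesses a $d \times d$ non-singular sub-matrix: choose $d$ linearly independent rows of $M$ to obtain a $d \times m$ matrix of rank $d$, and within it choose $d$ linearly independent columns. Writing $R_0$ and $C_0$ for the resulting row- and column-index sets, and denoting by $M[R,C]$ the sub-matrix of $M$ on rows $R$ and columns $C$, this gives $\Rreal(M[R_0,C_0]) = d$. Next, since $d \leq s \leq n$ and $d \leq t \leq m$, I would extend $R_0$ to an arbitrary row-index set $R \supseteq R_0$ with $|R| = s$, and $C_0$ to an arbitrary column-index set $C \supseteq C_0$ with $|C| = t$, and set $A = M[R,C]$, a sub-matrix of $M$ of size exactly $s \times t$.

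The rank of $A$ is then pinned from both sides. On one hand $A$ contains $M[R_0,C_0]$ as a sub-matrix, and deleting rows or columns cannot increase the rank, so $\Rreal(A) \geq \Rreal(M[R_0,C_0]) = d$; on the other hand $A$ is itself a sub-matrix of $M$, so $\Rreal(A) \leq \Rreal(M) = d$. Hence $\Rreal(A) = d$, and Lemma~\ref{basic_large_a} immediately yields that $A$ is basic, completing the argument. I do not expect any genuine obstacle here: the only points that demand a little care are the existence of the $d \times d$ non-singular minor and the monotonicity of the real rank under deletion of rows and columns, both of which are elementary.
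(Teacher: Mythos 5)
Your proposal is correct and follows essentially the same route as the paper: the paper's proof also simply combines Lemma~\ref{basic_large_a} with the fact that $M$ contains an $s \times t$ sub-matrix of rank $d$ for every $d \leq s \leq n$, $d \leq t \leq m$. You merely spell out that fact in detail (nonsingular $d \times d$ minor, arbitrary extension of the index sets, and the rank sandwich), which the paper leaves implicit.
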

\begin{proof}
The lemma follows from Lemma~\ref{basic_large_a} and the fact that $M$ contains a $s \times t$  sub-matrix $A$ of rank $d$,
for every $s,t$ such that $d \leq s \leq n, d \leq t \leq m$.
\end{proof}

\begin{lemma}
\label{zeroequalrow}
Let $M$ be a basic $0,1$ matrix and let $M'$ be a matrix achieved by augmenting $M$ with a row which is identical to one of the rows of $M$ or with an all-zero row,
and then adding a column to the augmented matrix. If $M'$ is basic then $\Rreal(M') > \Rreal(M)$.
\end{lemma}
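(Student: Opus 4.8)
The plan is to track the row space through the two augmentation steps and show that the new column is forced to raise its dimension by at least one. Write $\Rreal(M)=d$ and let $M$ have $m$ columns. First I would record that the intermediate matrix $\widetilde M$ obtained from $M$ by appending the prescribed row --- a copy of an existing row, or an all-zero row --- satisfies $\Rreal(\widetilde M)=\Rreal(M)=d$, since in either case the appended row is a linear combination of the rows of $M$ (equal to one of them, or zero). Because the top-left $n\times m$ block of $M'$ is $M$, we also have $\Rreal(M')\ge \Rreal(M)=d$. It therefore suffices to exclude the equality $\Rreal(M')=d$, i.e.\ to prove that the row space $V\subseteq\R^{m+1}$ of $M'$ has dimension at least $d+1$.

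The main device is the coordinate projection $\pi\colon\R^{m+1}\to\R^{m}$ that deletes the last (newly added) coordinate. Since $\pi$ is linear, $\pi(V)$ is the span of the projected rows of $M'$, and these projected rows are precisely the rows of $\widetilde M$; hence $\pi(V)$ is exactly the row space of $\widetilde M$, of dimension $d$. Applying rank--nullity to $\pi$ restricted to $V$ gives $\dim V=\dim\pi(V)+\dim(V\cap\ker\pi)=d+\dim(V\cap\ker\pi)$, so it remains only to exhibit a nonzero vector of $V$ lying in $\ker\pi$, that is, a vector in the row space of $M'$ of the form $(0,\dots,0,c)$ with $c\neq 0$. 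Producing such a vector would give $\dim V\ge d+1$ and finish the argument.

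This is exactly the point where the hypothesis that $M'$ is basic is used, and I expect it to be the crux of the proof. In the all-zero case, the appended row of $\widetilde M$ vanishes, so the corresponding row of $M'$ has the form $(0,\dots,0,c)$; since a basic matrix has no all-zero row, $c=1$, and this row is itself the desired kernel vector. In the duplicate case, the appended row of $M'$ and the row it duplicates agree on all of the first $m$ coordinates, so their difference is a vector $(0,\dots,0,c)$; since a basic matrix has no two equal rows, these two rows of $M'$ must differ, and as they already agree on the first $m$ coordinates they must differ in the last, forcing $c\neq 0$. In both cases we obtain a nonzero element of $V\cap\ker\pi$, whence $\dim V\ge d+1$ and $\Rreal(M')>\Rreal(M)$. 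The only bookkeeping to keep honest is the claim $\dim\pi(V)=d$, which is immediate from the identification of $\pi(V)$ with the row space of $\widetilde M$ noted above; beyond that, no case analysis past the two augmentation types is required.
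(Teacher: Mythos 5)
Your proof is correct and is essentially the paper's argument in different notation: the paper subtracts the duplicated row $R'$ from the new row $X'$ (or uses the appended row directly in the all-zero case) to exhibit exactly your vector $(0,\dots,0,c)$ with $c\neq 0$, using basicness in the same two ways you do, and then clears the last column by row operations where you instead invoke rank--nullity for the projection $\pi$. Both arguments hinge on the identical key observation, so this matches the paper's proof.
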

\begin{proof}
Let $X$ be the row we augmented $M$ with, let $Y$ be the new column added to the augmented matrix, and let $X'$ be the new row of $M'$ which includes $X$ and the value $x$
at the  intersection of $X'$ and $Y$.
Assume first that $X$ is the all-zero row. Then $x = 1$, otherwise, $X'$ is also an all zero row, in contradiction to $M'$ being basic.
We can subtract $X'$ from all rows of $M'$ which have a non-zero value in $Y$. The resulting matrix $M''$ has real rank which is strictly larger than that of $M$.

In a similar way, let $X$ be identical to one of the rows $R$ of $M$, and let $R'$ be the row $R$ extended with the additional value in $Y$ in this row.
Rows $R'$ and $X'$ must be different in the last position, that is, in their intersection with column $Y$, since $M'$ is basic.
Now we can subtract $R'$ from  $X'$ and continue as before, since the last row is now all-zero but the last value.
\end{proof}

\begin{definition}
A matrix $M$ has the {\em $k$-sum property} if it has $k$ rows (columns) whose sum equals another row (column) in $M$.
\end{definition}

\begin{lemma}
\label{sumoftworows}
Let $M$ be a $0,1$ matrix which has the $k$-sum property.
Then it is possible to partition all ones in the $k+1$ rows (columns) involved in this sum into at most  $k$ monochromatic rectangles of ones.
\end{lemma}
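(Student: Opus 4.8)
The plan is to exploit the disjointness of supports that the $k$-sum property forces on $0,1$ vectors. First I would fix notation: by symmetry (transposing $M$ if the property holds for columns) assume it holds for rows, so there are rows $R_1,\ldots,R_k$ of $M$ with $R_0 = R_1 + \cdots + R_k$, where $R_0$ is another row of $M$ and the sum is taken entrywise over the integers. Writing $S_i$ for the support of $R_i$ (the set of columns where $R_i$ has a $1$), the key structural observation is that $S_1,\ldots,S_k$ are pairwise disjoint and $S_0 = S_1 \cup \cdots \cup S_k$. This is immediate because all the $R_i$ are $0,1$ vectors: for every column $j$ we have $R_0[j] = \sum_{i=1}^k R_i[j] \in \{0,1\}$, so at most one of $R_1,\ldots,R_k$ can carry a $1$ in column $j$, and such a $1$ occurs exactly when $R_0[j]=1$.

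Next I would construct the rectangles explicitly. For each $i \in \{1,\ldots,k\}$ let $B_i$ be the sub-matrix of $M$ spanned by the two rows $R_0, R_i$ together with the set of columns $S_i$. Each $B_i$ is a monochromatic rectangle of ones: the entries of $R_i$ on $S_i$ are all $1$ by definition of the support, and the entries of $R_0$ on $S_i$ are all $1$ because $S_i \subseteq S_0$.

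It then remains to verify that these rectangles partition all the ones lying in the $k+1$ rows $R_0,\ldots,R_k$. The ones of row $R_i$ (for $i \geq 1$) sit precisely on the columns $S_i$, hence they are covered by $B_i$ and by no other $B_{i'}$. The ones of row $R_0$ sit on the columns $S_0$, and since $S_0$ is the \emph{disjoint} union of the $S_i$, each such one belongs to exactly one $B_i$. Thus every one of the $k+1$ rows is covered exactly once, giving a partition into at most $k$ monochromatic rectangles (if some $R_i$ is all-zero its support is empty and the corresponding $B_i$ is vacuous, which only lowers the count).

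I do not expect a genuine obstacle here: the entire argument rests on the single observation that the $k$-sum property over $0,1$ vectors forces disjoint supports summing to $S_0$, after which the rectangles essentially write themselves. The only points requiring a little care are to state clearly that the claim concerns only the ones in these $k+1$ rows (the remaining rows of $M$ are irrelevant), and to note that it is precisely the disjointness which lets us use $k$ rectangles rather than $k+1$.
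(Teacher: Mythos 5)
Your proposal is correct and follows essentially the same argument as the paper: both observe that the $0,1$ constraint forces the supports of $R_1,\ldots,R_k$ to be pairwise disjoint with union equal to the support of the sum row, and both form the $k$ rectangles by pairing each $R_i$ with the matching columns of the sum row. Your write-up merely makes explicit two details the paper leaves implicit (the exact partition verification and the vacuous case of an all-zero $R_i$), so there is nothing to flag.
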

\begin{proof}
Let $X$ be a row in $M$ which is the sum of rows $R_1,...,R_k$.
Since $M$ is a $0,1$ matrix, the ones in $R_1,...,R_k$ are in distinct columns, and row $X$ contains exactly the ones in these $k$ rows.
Thus, the ones in $R_1$ and the ones in $X$ that belong to the same columns define one monochromatic rectangle,
the ones in $R_2$ and the ones in $X$ in the same columns define a second rectangle and so on, and we get a total of $k$ rectangles as required.
\end{proof}

\begin{lemma}
\label{sumofrowaugmented}
Let $M$ be a real matrix, and let $M' = (M|X)$ be a matrix achieved by augmenting $M$ with a column $X$.
If $\Rreal(M) = \Rreal(M')$ then any dependence between the rows of $M$ is preserved in $M'$.
A similar claim holds when the role of the columns and rows is reversed.
\end{lemma}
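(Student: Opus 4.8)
The plan is to recast a \emph{dependence between the rows} of a matrix as a vector in its left null space, and then to compare the left null spaces of $M$ and $M'$ using the rank hypothesis. Throughout, the only tools needed are the rank--nullity theorem and the elementary fact that a subspace contained in another of equal finite dimension must coincide with it.

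First I would fix notation. For an $n$-row real matrix $N$, write a row dependence as a vector $\alpha \in \R^n$ with $\alpha^{\mathsf T} N = 0$, so the set of all such $\alpha$ is the left null space $\mathcal{N}(N) = \{\alpha \in \R^n : \alpha^{\mathsf T} N = 0\}$, a linear subspace of dimension $n - \Rreal(N)$ by rank--nullity. With this language, the statement that a row dependence of $M$ is \emph{preserved} in $M'$ means precisely $\mathcal{N}(M) \subseteq \mathcal{N}(M')$, which is what I would aim to establish.

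Next I would observe that the reverse inclusion holds with no hypothesis at all. Since $M$ consists of the first $m$ columns of $M' = (M \mid X)$, any $\alpha$ with $\alpha^{\mathsf T} M' = 0$ satisfies in particular $\alpha^{\mathsf T} M = 0$, simply by reading off the first $m$ coordinates. Hence $\mathcal{N}(M') \subseteq \mathcal{N}(M)$ unconditionally. I would then bring in the assumption $\Rreal(M) = \Rreal(M')$: by rank--nullity the two subspaces have the same dimension $n - \Rreal(M)$, so the inclusion $\mathcal{N}(M') \subseteq \mathcal{N}(M)$ between subspaces of equal finite dimension forces $\mathcal{N}(M') = \mathcal{N}(M)$. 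This equality yields the desired inclusion $\mathcal{N}(M) \subseteq \mathcal{N}(M')$, showing that every row dependence of $M$ is also one of $M'$.

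Finally, for the symmetric statement I would apply the result just proved to the transpose: augmenting $M$ with a \emph{row} is the same as augmenting $M^{\mathsf T}$ with a column, and $\Rreal(N) = \Rreal(N^{\mathsf T})$, so the column version transfers verbatim. I do not expect a genuine obstacle here; the only point requiring care is the \emph{direction} of the argument. The easy inclusion runs from $M'$ to $M$, and it is precisely the equality of ranks that upgrades this inclusion to an equality of null spaces, which is what lets one move a dependence in the harder direction, from $M$ to $M'$.
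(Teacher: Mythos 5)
Your proof is correct, and it takes a recognizably different route from the paper's. The paper argues on the \emph{column} side: from $\Rreal(M) = \Rreal(M')$ it deduces that the new column lies in the column space of $M$, writes $X = M\beta$, and then verifies the dependence directly by the one-line computation $\alpha^{\mathsf T} X = \alpha^{\mathsf T}(M\beta) = (\alpha^{\mathsf T} M)\beta = 0$. You instead dualize: you compare the left null spaces, note the unconditional inclusion $\mathcal{N}(M') \subseteq \mathcal{N}(M)$, and use rank--nullity to conclude that equal dimensions force $\mathcal{N}(M) = \mathcal{N}(M')$. The two arguments hinge on the same underlying principle --- an inclusion between subspaces of equal finite dimension is an equality --- applied to dual objects (the paper applies it, implicitly, to column spaces to get $X = M\beta$; you apply it to left null spaces). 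Your version has the small advantage of making explicit that the row-dependence spaces coincide \emph{exactly}, and of handling the transposed statement cleanly rather than by a ``similar claim'' remark. The paper's version is more constructive, and that matters for how the lemma is actually used downstream: in the case analyses of Lemma~\ref{lemma3} and Theorem~\ref{Rank3Structure}, new columns are built as explicit combinations $X = M\beta$ with specified coefficients $\beta$, and it is precisely those coefficients that constrain which $0,1$ entries can appear in the added column --- information your dimension count does not surface. As a proof of the lemma as stated, though, your argument is complete and sound.
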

\begin{proof}
A linear dependence between the rows of $M$ can be expressed as $\alpha^tM = 0$, for some vector of coefficients $\alpha$.
To prove the claim it is enough the show that also $\alpha^t X = 0$. Indeed,
if $\Rreal(M) = \Rreal(M')$ then $X = M\beta$ for some other vector $\beta$,
and therefore $\alpha^t X = \alpha^t (M\beta) = (\alpha^t M)\beta = 0$.
\end{proof}

The following lemmas determine the Boolean and binary rank of a matrix with real rank  $d = 1,2$,
and prove that for $d \geq 3$ the Boolean rank can be smaller than the real rank, however, for $d = 3,4$ it is at least $d-1$.

\begin{lemma}
\label{real12}
For any $0,1$ matrix $M$ it holds:
\begin{itemize}
\item
 $\Rreal(M) = 1$ if and only if $\Rbool(M) = 1$ if and only if $\Rbin(M) = 1$.
 \item
$\Rreal(M) = 2$ if and only if  $\Rbin(M) = 2$, and if $\Rreal(M) = 2$ then $\Rbool(M) = 2$.
\end{itemize}
\end{lemma}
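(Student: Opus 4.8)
The plan is to treat the two bullets separately, reducing everything to the structural fact that a $0,1$ matrix of real rank $2$ has at most three distinct nonzero rows obeying a prescribed sum structure, and then exhibiting an explicit partition of its ones into two monochromatic rectangles.

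For the rank-$1$ bullet I would first observe that a nonzero $0,1$ matrix has real rank $1$ if and only if (after deleting zero rows and columns) it is a single all-ones combinatorial rectangle: two distinct nonzero $0,1$ vectors can never be scalar multiples of one another, so all nonzero rows must coincide with one $0,1$ vector $r$, and likewise for the columns. Such a matrix is exactly $a\,b^{t}$ for $0,1$ indicator vectors $a,b$, which is simultaneously a real, binary, and Boolean rank-$1$ factorization; hence $\Rreal(M)=1$ forces $\Rbin(M)=\Rbool(M)=1$. Conversely, a binary (resp.\ Boolean) rank-$1$ decomposition is a single outer product (resp.\ a single monochromatic rectangle of ones), which is a real rank-$1$ factorization of a nonzero matrix, giving $\Rreal(M)=1$. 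This closes the chain of equivalences.

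For the rank-$2$ bullet the main step is to prove $\Rreal(M)=2\Rightarrow \Rbin(M)\le 2$, after which the reverse implication and the Boolean statement are immediate. Since duplicate and zero rows and columns do not affect $\Rbin$, I would pass to the kernel and fix two linearly independent $0,1$ rows $u,v$ spanning the (two-dimensional) row space. Writing any other $0,1$ row as $\alpha u+\beta v$ and reading off the constraints imposed on each of the index sets $A\setminus B$, $B\setminus A$, $A\cap B$, where $A=\mathrm{supp}(u)$ and $B=\mathrm{supp}(v)$, a short case analysis shows that the only nonzero $0,1$ vectors in the row space are either $\{u,v\}$ or $\{u,v,u+v\}$ with $\mathrm{supp}(u)\cap\mathrm{supp}(v)=\emptyset$ (the nested case $A\subseteq B$ reducing to the disjoint one). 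In the first case the ones split into the two rectangles (rows of each type)$\times$(its support). In the second case I would group the type-$(u+v)$ rows with the type-$u$ rows on the columns $\mathrm{supp}(u)$, and with the type-$v$ rows on the columns $\mathrm{supp}(v)$, yielding two disjoint all-ones rectangles that partition every one of $M$ exactly once. Either way $\Rbin(M)\le 2$, and since $\Rreal(M)\le\Rbin(M)$ we obtain $\Rbin(M)=2$.

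The remaining implications are bookkeeping resting on the rank-$1$ bullet. If $\Rbin(M)=2$ then its $0,1$ factorization is also a real factorization, so $\Rreal(M)\le 2$; since $M\ne 0$ and $\Rbin(M)\ne 1$ rule out $\Rreal(M)\le 1$ by the first bullet, $\Rreal(M)=2$. For the Boolean rank, $\Rbool(M)\le\Rbin(M)=2$ always holds, while $\Rbool(M)=1$ would force $\Rreal(M)=1$ by the first bullet and $\Rbool(M)=0$ would force $M=0$; hence $\Rreal(M)=2$ implies $\Rbool(M)=2$. The only genuine obstacle is the case analysis bounding the number of distinct nonzero rows of a real rank-$2$ matrix and producing the explicit two-rectangle partition; the disjoint-support configuration, which is precisely the $2$-sum situation handled in spirit by Lemma~\ref{sumoftworows}, is where one must take care to verify that the two rectangles cover the ones without overlap.
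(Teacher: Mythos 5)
Your proposal is correct and takes essentially the same route as the paper's proof: both fix two independent $0,1$ rows $u,v$ spanning the row space, show every other row must be one of $0,u,v,u+v$ after a suitable re-choice of basis (your support-based case analysis on $A\setminus B$, $B\setminus A$, $A\cap B$ is the paper's observation that the coefficients lie in $\{-1,0,1\}$ and that $u+v$ and $v-u$ cannot both occur), and then partition the ones into two monochromatic rectangles, which you write out explicitly while the paper invokes Lemma~\ref{sumoftworows}. The surrounding bookkeeping (the rank-$1$ equivalences and the deduction of the Boolean statement from the binary one) is identical in both arguments.
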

\begin{proof}
The first item is easy to verify from the definition. We now prove the second item.
If $\Rbin(M) = 2$ then $\Rreal(M) = 2$ since $\Rreal(M) \leq \Rbin(M)$ and if $\Rreal(M) = 1$ then $\Rbin(M) = 1$ by the first item.

Assume now that $\Rreal(M) = 2$. Thus, $M$ has two distinct non-zero rows $A,B$ such that any other row $X$ is a linear combination of $A$ and $B$.
Let $X = \alpha A + \beta B$. The possible values for $\alpha$ and $\beta$ are $1,-1$ or $0$, as we now show.
Since $A,B$ are distinct, there exists a position in which $A$ and $B$ differ, where
one has a zero in this position and the other has a one. Assume without loss of generality that $B$ has a $1$. Thus, $\beta$ is either $0$ or $1$.
It is easy to verify that $\alpha$ is restricted to $1,-1, 0$.
Therefore, all other rows of $M$ are one of $0,A,B,A + B, B-A$.
Also note, that rows $A + B$ and $B-A$ cannot coincide in $M$.
Thus, we can choose $A$ and $B$ so that all other rows are one of $0,A,B,A + B$.
Hence, by Lemma~\ref{sumoftworows} we can cover and partition $M$ with at most $2$ rectangles.
By the first item of this lemma, we can again rule out the option that $\Rbool(M)$ = 1.
\end{proof}

\begin{lemma}[Beasley~\cite{BEASLEY20123469}]
\label{lemmaBeasley}
For $k = 1,2$, $i(M) = k$ if and only if $\Rbool(M) = k$.
\end{lemma}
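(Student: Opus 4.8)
The inequality $i(M)\le\Rbool(M)$ is already available (it is the lower bound recalled in the introduction), and a nonzero matrix trivially has $i(M)\ge 1$ and $\Rbool(M)\ge 1$. So the plan is first to reduce the whole statement to two clean upper bounds on the Boolean rank, namely \textbf{(I)} if $i(M)\le 1$ then $\Rbool(M)\le 1$, and \textbf{(II)} if $i(M)\le 2$ then $\Rbool(M)\le 2$. Indeed, each direction of the lemma follows formally from (I), (II) and monotonicity. For instance, if $\Rbool(M)=2$ then $i(M)\le 2$ by monotonicity, while $i(M)=1$ would force $\Rbool(M)\le 1$ by (I); since $i(M)\ge 1$, we get $i(M)=2$. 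Conversely, if $i(M)=2$ then $\Rbool(M)\ge 2$ by monotonicity and $\Rbool(M)\le 2$ by (II), so $\Rbool(M)=2$; the case $k=1$ is identical using (I). Thus the entire mathematical content lies in proving the two upper bounds (I) and (II).

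For (I), I would show directly that the hypothesis $i(M)\le 1$ forces the ones of $M$ to form a single monochromatic rectangle, which gives $\Rbool(M)\le 1$. Let $R$ be the set of rows and $C$ the set of columns that contain a one; it suffices to prove $M_{r,c}=1$ for every $r\in R$ and $c\in C$. Fix such $r,c$, pick a one $M_{r,c'}=1$ in row $r$ and a one $M_{r',c}=1$ in column $c$. If $r=r'$ or $c=c'$ then $M_{r,c}=1$ immediately; otherwise $(r,c')$ and $(r',c)$ lie in distinct rows and columns, and since $i(M)\le 1$ forbids them from being an isolation pair, they must lie in a common all-ones $2\times 2$ submatrix, forcing $M_{r,c}=1$. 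Hence the ones form the rectangle $R\times C$ and $\Rbool(M)\le 1$.

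The heart of the lemma is (II), and this is where I expect the real difficulty to lie. I would argue the contrapositive: assuming the ones of $M$ cannot be covered by two monochromatic rectangles, I would produce an isolation set of size $3$. By (I) there is at least one isolation pair, so let $\{p_1,p_2\}$ be a \emph{maximum} isolation set and suppose for contradiction it has size $2$, say $p_1=(r_1,c_1)$, $p_2=(r_2,c_2)$ with $M_{r_1,c_2}=0$. Maximality means that every other one-entry fails to be isolated from $p_1$ or from $p_2$ (otherwise it would extend the isolation set to size $3$). The natural attempt is then to cover the entries ``attached to $p_1$'' by the rectangle $R_1\times C_1$, where $R_1=\{r:M_{r,c_1}=1\}$ and $C_1=\{c:M_{r_1,c}=1\}$, and symmetrically for $p_2$; one checks easily that every entry not isolated from $p_1$ indeed lies in $R_1\times C_1$.

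The obstacle is that $R_1\times C_1$ need not be all-ones: there may be $r\in R_1$, $c\in C_1$ with $M_{r,c}=0$ even though $M_{r,c_1}=M_{r_1,c}=1$. My plan to overcome this is to show that such a ``bad corner'' is exactly what creates a third mutually isolated one-entry: the pair $(r,c_1)$ and $(r_1,c)$ is then itself an isolation pair, since their $2\times 2$ submatrix contains the zero $M_{r,c}$, and by tracking how this new pair relates to $p_1,p_2$ — using $M_{r_1,c_2}=0$ and distinguishing cases according to the value of $M_{r_2,c_1}$ — I expect to locate a one-entry isolated from two others, contradicting the maximality of $\{p_1,p_2\}$. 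An alternative, possibly cleaner, route is to first reduce to \emph{basic} matrices (both $i(M)$ and $\Rbool(M)$ are unchanged by deleting zero rows/columns and duplicate rows/columns) and then prove that a basic matrix with $i(M)\le 2$ has at most three distinct rows and at most three distinct columns; the bound $\Rbool(M)\le 2$ would then follow from a finite inspection of the few basic matrices of size at most $3\times 3$. The main work — and the step I expect to be most delicate — is precisely this passage from the purely local hypothesis $i(M)\le 2$ to global structural control of $M$.
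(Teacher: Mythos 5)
Your reduction of the lemma to the two upper bounds (I) and (II) is sound, and your proof of (I) is complete and correct. But note that the paper offers no proof of this lemma at all --- it is imported by citation from Beasley --- so your argument must stand on its own, and part (II), which you yourself identify as the heart of the matter, contains a genuine gap: the implication you ``expect'' to verify by case analysis --- that a bad corner in $R_1\times C_1$ leads to three mutually isolated ones, contradicting maximality --- is simply false. Take
$$ M = \left(
  \begin{array}{ccc}
    1 & 1 & 0 \\
    1 & 0 & 1 \\
    1 & 1 & 1 \\
  \end{array}
\right), $$
a column permutation of the matrix $A_7$ that the paper singles out, so $i(M)=2$ and $\Rbool(M)=2$. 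Choose the maximum isolation set $p_1=(1,1)$, $p_2=(3,3)$, which matches your normalization since $M_{1,3}=0$. Then $R_1=\{1,2,3\}$, $C_1=\{1,2\}$, and $(2,2)$ is a bad corner: $M_{2,1}=M_{1,2}=1$ but $M_{2,2}=0$. Your pair $q_1=(2,1)$, $q_2=(1,2)$ is indeed isolated, but the only one-entry outside rows $1,2$ and columns $1,2$ is $(3,3)$, and the $2\times 2$ submatrix on rows $2,3$ and columns $1,3$ is all ones, so $(3,3)$ is not isolated from $q_1$; in fact no isolation set of size $3$ exists anywhere in $M$ because $i(M)=2$. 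This lands exactly in the sub-case of your own case split where $M_{r_2,c_1}=1$ and the relevant $2\times 2$ blocks are all ones, and shows that no amount of ``tracking how the new pair relates to $p_1,p_2$'' can close it: bad corners are perfectly compatible with $i(M)=2$, so the contradiction you are hunting for does not exist.

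What the example reveals is that a bad corner should not yield a contradiction but rather the conclusion that $R_1\times C_1$ and $R_2\times C_2$ were the wrong rectangles: here the ones of $M$ are covered by $\{1,3\}\times\{1,2\}$ and $\{2,3\}\times\{1,3\}$, obtained by shrinking $R_1$ to the rows whose support contains $C_1$ (and pruning $C_2$ similarly). A correct proof of (II) must carry out this repair globally --- replace the naive rectangles by genuine all-ones rectangles through $p_1$ and $p_2$ and show that any one-entry left uncovered after the pruning forces a third isolated one --- and that is precisely the nontrivial content of Beasley's theorem, which the paper deliberately cites rather than reproves. Your fallback route (pass to the kernel and show that a basic matrix with $i(M)\le 2$ has at most three rows and columns, then inspect finitely many cases) states a true fact, but as you anticipate its key step is equivalent in difficulty to (II) itself. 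So as it stands, your proposal establishes the lemma only for $k=1$.
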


\begin{lemma}
\label{RealnBoolnminus1}
For any $d \geq 3$ there exists a basic $0,1$ matrix $M$  with $\Rreal(M) = d$ and $\Rbool(M) = i(M) = d-1$.
\end{lemma}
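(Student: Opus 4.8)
The plan is to produce the required matrices by a block-diagonal (direct-sum) construction, seeded by the single $3\times 3$ matrix that already appears in Theorem~\ref{Rank3Structure}. Write
$$
E = \left(\begin{array}{ccc} 1 & 1 & 0 \\ 0 & 1 & 1 \\ 1 & 1 & 1 \end{array}\right),
$$
and for $d \ge 3$ set $M = E$ when $d = 3$ and $M = \left(\begin{smallmatrix} E & 0 \\ 0 & I_{d-3}\end{smallmatrix}\right)$ when $d \ge 4$, where $I_{d-3}$ is the identity. First I would record the three facts about the seed. Since $\det E = 1$ we have $\Rreal(E) = 3$. The two rectangles (rows $\{1,3\}$, columns $\{1,2\}$) and (rows $\{2,3\}$, columns $\{2,3\}$) together cover all seven ones of $E$, so $\Rbool(E) \le 2$. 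And $\{(1,1),(2,3)\}$ is an isolation set, since its spanning $2\times 2$ submatrix is $\left(\begin{smallmatrix}1 & 0\\0 & 1\end{smallmatrix}\right)$ and is therefore not all-ones; hence $i(E) \ge 2$. Combined with the universal inequality $i(E) \le \Rbool(E)$, this pins $\Rbool(E) = i(E) = 2$. (These are exactly the values asserted for this matrix in Theorem~\ref{Rank3Structure}, so this step can instead be quoted.)

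Next I would show that the block-diagonal construction adds the relevant quantities. For the real rank this is standard, as the column space is a direct sum, so $\Rreal(M) = \Rreal(E) + \Rreal(I_{d-3}) = 3 + (d-3) = d$. For the Boolean rank I only need an upper bound: taking the two covering rectangles of $E$ inside the first block together with the $d-3$ singleton rectangles covering the diagonal of $I_{d-3}$ covers every one of $M$, so $\Rbool(M) \le d-1$. For the isolation set I need a matching lower bound: the union of the size-$2$ isolation set of $E$ with the $d-3$ diagonal ones of $I_{d-3}$ is an isolation set of $M$ of size $d-1$. Indeed, two of these ones lying in different blocks automatically sit in distinct rows and distinct columns, and their spanning $2\times 2$ submatrix meets an off-diagonal all-zero block and so is not all-ones; within each block the fooling property is already established. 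Thus $i(M) \ge d-1$.

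Finally I would combine the bounds. Since an isolation set is always a lower bound on the Boolean rank, $d-1 \le i(M) \le \Rbool(M) \le d-1$, which forces $i(M) = \Rbool(M) = d-1$, while $\Rreal(M) = d$. It remains to check that $M$ is basic: it has no all-zero row or column (each row and column meets its own block nontrivially), and its rows are pairwise distinct because the $E$-rows are supported on the first three columns while the $I_{d-3}$-rows are supported on the remaining columns, and within each block the rows are distinct; the same argument applies to columns.

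The step I expect to require the most care is the interaction of monochromatic rectangles and isolation sets with the block structure, namely the observation that the off-diagonal zero blocks prevent any all-ones rectangle, and any fooling $2\times 2$, from straddling the two blocks. This is precisely what makes the cover bound and the isolation-set bound add cleanly, and it is the reason the gap of exactly $1$ exhibited by the seed $E$ is transported unchanged to every $d \ge 3$. Everything else reduces to a finite verification on the fixed matrix $E$ together with the trivial values $\Rreal(I_{d-3}) = \Rbool(I_{d-3}) = i(I_{d-3}) = d-3$ for the identity block.
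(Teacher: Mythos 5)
Your proposal is correct and takes essentially the same route as the paper: the paper's proof uses exactly this block-diagonal matrix, with your seed $E$ (called $A$ there) placed alongside the identity $I_{d-3}$, and asserts $\Rreal(A)=3$, $\Rbool(A)=i(A)=2$ as easy to verify. The only difference is that you spell out the finite checks on the seed and the additivity of the three quantities across the zero off-diagonal blocks, details the paper leaves implicit.
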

\begin{proof}
Consider the block matrix $M$ of size $d \times d$ which has two blocks on the main diagonal and zeros elsewhere.
One block is the identity matrix of size $(d-3)\times (d-3)$,
and the other is following  $3 \times 3$ matrix $A$:
$$
A = \left(
  \begin{array}{ccc}
    1 & 1 & 0 \\
    0 & 1 & 1 \\
    1 & 1 & 1 \\
  \end{array}
\right)
$$
It is easy to verify that $\Rreal(A) = 3, \Rbool(A) = i(A) = 2$.
Thus, $\Rreal(M) = (d-3) + 3 = d$ and  $\Rbool(M) = i(M) = (d-3) + 2 = d-1$.
\end{proof}

\begin{lemma}
\label{RealBool34}
If $\Rreal(M) = d$,  $d = 3,4$, then $\Rbool(M), i(M) \geq d-1$.
\end{lemma}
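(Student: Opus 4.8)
The plan is to prove the bound on $\Rbool(M)$ directly and then transfer it to $i(M)$ using Beasley's characterization for the small cases. The engine of the argument is a general fact that I would isolate as a claim: for every $0,1$ matrix $M$, if $\Rbool(M)=k$ then $\Rreal(M)\le 2^k-1$. To see this, take an optimal Boolean cover $M = R_1\vee\cdots\vee R_k$, where each $R_i$ is a monochromatic rectangle of ones, i.e. $R_i=u_iv_i^t$ for $0,1$ vectors $u_i,v_i$. Applying the inclusion--exclusion identity $1-\prod_i(1-a_i)=\sum_{\emptyset\ne S\subseteq[k]}(-1)^{|S|-1}\prod_{i\in S}a_i$ entrywise to the $0,1$ entries of the cover yields, over $\R$,
\[
M=\sum_{\emptyset\ne S\subseteq[k]}(-1)^{|S|-1}\bigwedge_{i\in S}R_i .
\]
Each entrywise-AND term satisfies $\big(\bigwedge_{i\in S}R_i\big)_{j,\ell}=\big(\prod_{i\in S}(u_i)_j\big)\big(\prod_{i\in S}(v_i)_\ell\big)$, so it is an outer product and hence has real rank at most $1$. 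Since there are $2^k-1$ nonempty subsets $S$, the claim $\Rreal(M)\le 2^k-1$ follows.

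With the claim in hand the lower bounds on $\Rbool(M)$ are immediate. For $d=3$, if $\Rbool(M)=1$ then $\Rreal(M)\le 1<3$ (this also follows directly from the first item of Lemma~\ref{real12}), so $\Rbool(M)\ge 2=d-1$. For $d=4$, if $\Rbool(M)\le 2$ then $\Rreal(M)\le 2^2-1=3<4$, so $\Rbool(M)\ge 3=d-1$. To bound $i(M)$ I would then invoke Beasley's Lemma~\ref{lemmaBeasley} rather than the inequality $i(M)\le\Rbool(M)$ (which points the wrong way for a lower bound). Since $\Rreal(M)=d\ge 3$, the matrix $M$ is nonzero and $i(M)\ge 1$. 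For $d=3$, if $i(M)=1$ then Beasley's lemma forces $\Rbool(M)=1$, contradicting $\Rbool(M)\ge 2$; hence $i(M)\ge 2$. For $d=4$, if $i(M)\in\{1,2\}$ then Beasley's lemma forces $\Rbool(M)=i(M)\le 2$, contradicting $\Rbool(M)\ge 3$; hence $i(M)\ge 3$. In both cases $i(M)\ge d-1$.

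The argument is short, and the only step carrying genuine content is the inclusion--exclusion bound $\Rbool(M)=k\Rightarrow\Rreal(M)\le 2^k-1$; the mild subtlety is verifying that an entrywise AND of rank-one rectangles is again rank one, which is exactly what makes $2^k-1$ (and not $2^k$) real-rank-one terms suffice. The second point to handle carefully is the transfer to $i(M)$: because isolation-set size only lower-bounds the Boolean rank, one cannot conclude $i(M)\ge d-1$ from $\Rbool(M)\ge d-1$ directly, and Beasley's exact equivalence $i(M)=k\iff\Rbool(M)=k$ for $k=1,2$ is precisely the tool that closes this gap at the small ranks in question.
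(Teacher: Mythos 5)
Your proof is correct, and it reaches the paper's conclusion by a genuinely different route at the one step that carries content. The overall skeleton matches the paper's: first lower-bound $\Rbool(M)$, then transfer to $i(M)$ via Beasley's equivalence (Lemma~\ref{lemmaBeasley}) used contrapositively for $k=1,2$ --- the paper invokes Beasley in exactly this way, just more tersely, so your transfer step is the same, merely spelled out more scrupulously (including the check $i(M)\ge 1$ from $M\neq 0$, which the paper leaves implicit). Where you diverge is the core of the $d=4$ case: the paper argues combinatorially that a cover by two monochromatic rectangles forces every nonzero row of $M$ to be one of $v_1^t$, $v_2^t$, or their entrywise OR, so $M$ has at most three distinct nonzero rows and hence $\Rreal(M)\le 3$; you instead prove the general inequality $\Rbool(M)=k \Rightarrow \Rreal(M)\le 2^k-1$ by expanding the cover via inclusion--exclusion and observing that an entrywise AND of rank-one $0,1$ rectangles is again an outer product, then specialize to $k=2$. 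The two arguments are two faces of the same fact --- the paper's row-counting also generalizes, since $k$ rectangles admit at most $2^k-1$ nonzero row types --- but yours yields a clean, reusable lemma (the standard bound relating real and Boolean rank), at the cost of a small algebraic verification, while the paper's version is more elementary and needs nothing beyond counting for the single instance $k=2$ it uses. Your explicit remark that $i(M)\le\Rbool(M)$ points the wrong way, and that Beasley's exact equivalence is what closes the gap at small ranks, is precisely the right reading of the role Lemma~\ref{lemmaBeasley} plays in the paper's proof.
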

\begin{proof}
Let $d =3$. Assume by contradiction that $\Rbool(M) = 1$. Then by Lemma~\ref{real12}, $\Rreal(M) = 1$.
Thus, $\Rbool(M) \geq 2$, and by Lemma~\ref{lemmaBeasley}, $i(M) \geq 2$.

Now let $d  = 4$ and assume by contradiction that $\Rbool(M) \leq 2$. Therefore, the ones of $M$ can be covered by at most $2$ monochromatic rectangles, and
so $M$ has at most $3$ different rows that are not all zeros (two rows, each including exactly one of the $2$ rectangles, and a row including both rectangles).
But then the real rank of $M$ is at most $3$ and we get a contradiction. Thus, $\Rbool(M) \geq 3$, and using Lemma~\ref{lemmaBeasley} we get that $i(M) \geq 3$ as well.
\end{proof}

\definecolor{bluegray}{rgb}{0.4, 0.6, 0.8}

\begin{figure}[htb!]
\centering
$
\left(
  \begin{array}{cccccccccc|c}
    \cellcolor{bluegray}{\bf 1} & \cellcolor{bluegray}{\bf 1} & \cellcolor{bluegray}{\bf 1} & \cellcolor{bluegray}{\bf 1} & \cellcolor{bluegray}{\bf 1} & 0 & 0 & 0 & 0 & 0 & 0 \\
    0 & \cellcolor{cyan!25}{\bf 1} & \cellcolor{cyan!25}{\bf 1} &\cellcolor{cyan!25}{\bf 1} & \cellcolor{cyan!25}{\bf 1} & \cellcolor{cyan!25}{\bf 1} & 0 & 0 & 0 & 0 & 0 \\
    0 & 0 & \cellcolor{orange}{\bf 1} & \cellcolor{orange}{\bf 1} & \cellcolor{orange}{\bf 1} & \cellcolor{orange}{\bf 1} & \cellcolor{orange}{\bf 1} & 0 & 0 & 0 & 0\\
    0 & 0 & 0 & \cellcolor{yellow}{\bf 1} & \cellcolor{yellow}{\bf 1} & \cellcolor{yellow}{\bf 1} & \cellcolor{yellow}{\bf 1} & \cellcolor{yellow}{\bf 1} & 0 & 0 & \cellcolor{yellow}{\bf 1} \\
    0 & 0 & 0 & 0 & \cellcolor{violet!25}{\bf 1} & \cellcolor{violet!25}{\bf 1} & \cellcolor{violet!25}{\bf 1} & \cellcolor{violet!25}{\bf 1} & \cellcolor{violet!25}{\bf 1} & 0 & 0 \\
    0 & 0 & 0 & 0 & 0 & \cellcolor{red!25}{\bf 1} & \cellcolor{red!25}{\bf 1} & \cellcolor{red!25}{\bf 1} & \cellcolor{red!25}{\bf 1} & \cellcolor{red!25}{\bf 1} & \cellcolor{red!25}{\bf 1} \\
    \cellcolor{green!25}{\bf 1} & 0 & 0 & 0 & 0 & 0 & \cellcolor{green!25}{\bf 1} & \cellcolor{green!25}{\bf 1} & \cellcolor{green!25}{\bf 1} & \cellcolor{green!25}{\bf 1} & \cellcolor{green!25}{\bf 1}  \\
    \cellcolor{brown}{\bf 1} & \cellcolor{brown}{\bf 1} & 0 & 0 & 0 & 0 & 0 & \cellcolor{brown}{\bf 1} & \cellcolor{brown}{\bf 1} & \cellcolor{brown}{\bf 1} & \cellcolor{brown}{\bf 1} \\
    \cellcolor{gray!25}{\bf 1} & \cellcolor{gray!25}{\bf 1}  & \cellcolor{gray!25}{\bf 1} & 0 & 0 & 0 & 0 & 0 & \cellcolor{gray!25}{\bf 1} & \cellcolor{gray!25}{\bf 1} & 0 \\
    \cellcolor{purple}{\bf 1} & \cellcolor{purple}{\bf 1}  & \cellcolor{purple}{\bf 1} & \cellcolor{purple}{\bf 1} & 0 & 0 & 0 & 0 & 0 & \cellcolor{purple}{\bf 1}  & \cellcolor{purple}{\bf 1} \\ \hline
    \cellcolor{bluegray}{\bf 1} & \cellcolor{bluegray}{\bf 1} & \cellcolor{bluegray}{\bf 1} & \cellcolor{bluegray}{\bf 1} & \cellcolor{bluegray}{\bf 1} &   \cellcolor{red!25}{\bf 1} &   \cellcolor{red!25}{\bf 1} & \cellcolor{red!25}{\bf 1} & \cellcolor{red!25}{\bf 1} & \cellcolor{red!25}{\bf 1} & \cellcolor{red!25}{\bf 1}\\
  \end{array}
\right)
$
\caption{A matrix of size $(2k+1)\times (2k+1)$ built by adding a row and a column to the circulant matrix $C_{2k}$, where $k = 5$.
A partition of the ones into $2k$ rectangles is described, where each rectangle is represented by a different color,
and since the first $2k$ ones on the main diagonal are an isolation set of size $2k$,  the binary rank is exactly $2k$.}
\label{circulant}	
\end{figure}

The following lemma was proved by de Caen,  Gregory, Henson,  Lundgren, and Maybee~\cite{realnonnegative}, and deals with even sized matrices.
As we prove in Lemma~\ref{Realkplus1Bin2k1} their result can be generalized to odd sized matrices.
Theorem~\ref{theodoublegap} follows.

\begin{lemma}
\label{Realkplus1Bin2k}
For $k \geq 2$, let $C_{2k}$ be the $2k \times 2k$ basic circulant $0,1$ matrix defined by a row with $k$ consecutive ones followed by $k$ zeros.
Then $\Rreal(C_{2k}) = k+1$,  $\Rbin(C_{2k}) = \Rbool(C_{2k}) = i(C_{2k}) = 2k$.
\end{lemma}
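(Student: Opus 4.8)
The plan is to separate the three equalities and to handle the cheap inequalities first. For every $0,1$ matrix one has the chain $i(C_{2k}) \le \Rbool(C_{2k}) \le \Rbin(C_{2k})$, since an isolation set lower-bounds both ranks and the Boolean rank never exceeds the binary rank. The binary rank is at most $2k$ because the ones of any $2k \times 2k$ matrix can be partitioned into at most $2k$ monochromatic rectangles, one per row (the ones of row $i$ form a single all-ones rectangle). Hence it suffices to prove two substantive facts: that $\Rreal(C_{2k}) = k+1$, and that $C_{2k}$ contains an isolation set of size $2k$. The second forces $i(C_{2k}) = 2k$, which by the chain collapses all three of $i$, $\Rbool$, $\Rbin$ to exactly $2k$.

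For the structural bookkeeping I would index rows and columns by $\Z_{2k}$ and record that $(C_{2k})_{i,j} = 1$ exactly when $(j-i) \bmod 2k \in \{0,1,\dots,k-1\}$, so row $R_i$ carries its $k$ ones on the cyclic block of columns $i, i+1, \dots, i+k-1$. The key identity is that $R_i$ and $R_{i+k}$ have complementary supports, so $R_i + R_{i+k} = \mathbf 1$, the all-ones vector, for every $i$.

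For the real rank I would argue two-sidedly. The complementary-support identity gives $R_{i+k} = \mathbf 1 - R_i$, so every row of $C_{2k}$ lies in the span of $\{R_0,\dots,R_{k-1},\mathbf 1\}$, whence $\Rreal(C_{2k}) \le k+1$. For the matching lower bound I would show that $R_0, R_1, \dots, R_k$ are linearly independent: restricting these $k+1$ rows to columns $0,1,\dots,k$ produces a $(k+1)\times(k+1)$ matrix whose column $0$ is nonzero only in the $R_0$ row, while rows $R_1,\dots,R_k$ restricted to columns $1,\dots,k$ form an upper-triangular all-ones block with unit diagonal. Expanding along column $0$ shows this determinant equals $1$, so the rows are independent and $\Rreal(C_{2k}) \ge k+1$. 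This independence/determinant computation carries the real content and is the step I expect to require the most care.

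For the isolation set I would take the main diagonal, which is all ones (since $(i,i)$ corresponds to $(j-i)\bmod 2k = 0$), has size $2k$, and sits in distinct rows and columns. It remains only to verify that no two diagonal ones $(i,i),(j,j)$ with $i \neq j$ lie in a common all-ones $2\times 2$ submatrix; such a submatrix would additionally require $(i,j) = (j,i) = 1$, i.e. $(j-i)\bmod 2k \in \{1,\dots,k-1\}$ and simultaneously $(i-j)\bmod 2k \in \{1,\dots,k-1\}$. Since the latter is equivalent to $(j-i)\bmod 2k \in \{k+1,\dots,2k-1\}$ and the two ranges are disjoint, no such $2\times 2$ exists. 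Thus the diagonal is an isolation set, giving $i(C_{2k}) = 2k$, and together with the opening chain and the trivial upper bound $\Rbin(C_{2k}) \le 2k$ this yields $\Rbin(C_{2k}) = \Rbool(C_{2k}) = i(C_{2k}) = 2k$, alongside $\Rreal(C_{2k}) = k+1$, completing the proof.
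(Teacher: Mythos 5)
Your proof is correct. Its combinatorial half is exactly the paper's argument: the paper, which attributes this lemma to de Caen, Gregory, Henson, Lundgren, and Maybee~\cite{realnonnegative}, likewise takes the $2k$ diagonal ones as an isolation set and covers the ones of each row by a single monochromatic rectangle, giving the sandwich $2k \leq i(C_{2k}) \leq \Rbool(C_{2k}) \leq \Rbin(C_{2k}) \leq 2k$. Where you genuinely diverge is on $\Rreal(C_{2k}) = k+1$: the paper does not prove this claim at all, deferring to~\cite{realnonnegative} (or to Haviv and Parnas~\cite{haviv2023binary}), whereas you supply a self-contained two-sided argument --- the complementary-support identity $R_i + R_{i+k} = \mathbf{1}$ for the upper bound, and for the lower bound the determinant of the restriction of $R_0,\dots,R_k$ to columns $0,\dots,k$. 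I verified the details of that minor: for $1 \leq i \leq k$ the support $\{i,\dots,i+k-1\}$ of $R_i$ involves no wraparound modulo $2k$ and meets $\{0,\dots,k\}$ in exactly $\{i,\dots,k\}$, so column $0$ has its lone $1$ in the $R_0$ row and the remaining block is upper triangular with unit diagonal, giving determinant $1$; your disjointness of the residue ranges $\{1,\dots,k-1\}$ and $\{k+1,\dots,2k-1\}$ correctly rules out an all-ones $2\times 2$ submatrix on two diagonal ones. Your route buys a fully self-contained proof at the cost of some linear algebra; the paper's citation buys brevity. A small economy worth noting in your write-up: you never check directly that the diagonal is a \emph{maximum} isolation set --- the sandwich forces $i(C_{2k}) = 2k$ automatically, which is cleaner than the maximality check the paper's phrasing suggests.
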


\begin{lemma}
\label{Realkplus1Bin2k1}
For $k \geq 3$, there exists a basic $0,1$ matrix $M$ of size $(2k + 1)\times (2k+ 1)$, such that $\Rreal(M) = k+1$,  $\Rbin(M) = \Rbool(M) = i(M) = 2k$.
\end{lemma}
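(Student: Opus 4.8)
The plan is to extend the even case (Lemma~\ref{Realkplus1Bin2k}) by augmenting $C_{2k}$ with a single column $X$ and a final all-ones row, exactly as in Figure~\ref{circulant}. Index the rows and columns of $C_{2k}$ by $\Z_{2k}$, so that column $j$ has its $k$ ones in rows $\{j-k+1,\dots,j\}$ (mod $2k$). I take $X$ to be the $0,1$ column whose ones lie exactly in rows $\{k-1\}\cup\{k+1,\dots,2k-2\}\cup\{2k\}$; a direct check shows $X=\mathrm{col}_{2k-2}-\mathrm{col}_{2k-1}+\mathrm{col}_{2k}$, so $X$ lies in the column space of $C_{2k}$ and has exactly $k$ ones, with $X_1=0$ and $X_{k+1}=1$. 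The matrix $M$ is $C_{2k}$ with this column appended and then the all-ones row of length $2k+1$ added.

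First I would verify $\Rreal(M)=k+1$ and that $M$ is basic. Appending $X\in\mathrm{colspace}(C_{2k})$ does not change the real rank. For the all-ones row, each column of $C_{2k}$ has exactly $k$ ones, so $\tfrac1k\,\mathbf{1}^{t}C_{2k}=\mathbf{1}^{t}$; since $X$ also has $k$ ones, $\tfrac1k\,\mathbf{1}^{t}(C_{2k}\mid X)$ is the all-ones row of length $2k+1$, which is therefore in the row space of $(C_{2k}\mid X)$, so adding it keeps the rank at $k+1$. As $C_{2k}$ is a submatrix of $M$, we get $\Rreal(M)=k+1$. For basicness, distinct rows and columns of $C_{2k}$ remain distinct after the augmentation, the all-ones row has $2k+1$ ones and is new, and $X$ is not a cyclic interval (its support has gaps at $k$ and $2k-1$), hence differs from every column of $C_{2k}$.

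Then I would pin down $i(M)$, $\Rbool(M)$ and $\Rbin(M)$ by a squeeze. The $2k$ diagonal ones of the $C_{2k}$-block form an isolation set there, and since no entry $M_{i,j}$ with $i,j\le 2k$ is altered by the augmentation, no two of them lie in a common all-one $2\times 2$ submatrix of $M$; thus $i(M)\ge 2k$. For the matching upper bound I exhibit a partition of the ones of $M$ into exactly $2k$ rectangles: the two rectangles $\{1,2k+1\}\times\{1,\dots,k\}$ and $\{k+1,2k+1\}\times\{k+1,\dots,2k+1\}$ jointly cover the entire bottom row together with all of rows $1$ and $k+1$ (this is where $X_1=0$ and $X_{k+1}=1$ are used), while each of the remaining $2k-2$ rows in $\{2,\dots,k\}\cup\{k+2,\dots,2k\}$ spans its own single-row rectangle. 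These are pairwise disjoint and cover every one, so $\Rbin(M)\le 2k$. Since $i(M)\le\Rbool(M)\le\Rbin(M)$, all three equal $2k$.

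The crux, and the only step requiring real care, is the choice of $X$: it must simultaneously lie in $\mathrm{colspace}(C_{2k})$ (to hold the real rank at $k+1$), be a genuine new $0,1$ column with exactly $k$ ones (for basicness), and have the support pattern $X_1=0,\ X_{k+1}=1$ that lets the all-ones row be absorbed into just two of the $2k$ rectangles. The hypothesis $k\ge 3$ is precisely what makes these demands compatible, as it is equivalent to $1\le k-2$ and $k+1\le 2k-2$; once $X$ is fixed, every remaining verification is routine.
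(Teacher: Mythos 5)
Your proposal is correct and is essentially the paper's own proof: the matrix you construct (appending the column $X = Y_{2k-2}-Y_{2k-1}+Y_{2k}$ to $C_{2k}$ together with the all-ones row) is exactly the matrix of Figure~\ref{circulant}, and your diagonal isolation set and your $2k$-rectangle partition (two rectangles absorbing rows $1$, $k+1$, $2k+1$, plus one single-row rectangle for each remaining row) coincide with the paper's argument via Lemmas~\ref{sumoftworows} and~\ref{sumofrowaugmented}. The only cosmetic difference is that you certify the all-ones row through $\tfrac{1}{k}\,\mathbf{1}^{t}(C_{2k}\mid X)=\mathbf{1}^{t}$ rather than as the sum of rows $1$ and $k+1$, which yields the same conclusion.
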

\begin{proof}
Let $C_{2k}$ be the circulant matrix  described in Lemma~\ref{Realkplus1Bin2k}.
We show how to add a row and a column to $C_{2k}$ and get a matrix $M$ of size  $(2k+1) \times (2k+1)$ with the claimed properties.

First add to $C_{2k}$ a row which is the sum of rows $1$ and $k+1$ of $C_{2k}$. The resulting new row is the all-one vector.
Next add to the augmented matrix a new column $Y_{2k+1} = Y_{2k} - Y_{2k-1} + Y_{2k-2}$, which is a linear combination of the last three columns $Y_{2k-2},Y_{2k-1},Y_{2k}$
of the augmented matrix.
The new column is $(0,0,...,0,1,0,1,1,...,1,0,1,1)^T$, where the first block of zeros and the middle block of ones are each of length $k-2$.
Column $Y_{2k+1}$ is well defined since $k - 2 \geq 1$ as $k \geq 3$, and it is easy to verify from the structure of the circulant matrix $C_{2k}$ that $Y_{2k+1}$ contains only zeros and ones
(see Figure~\ref{circulant}).

The matrix $M$ is basic since $C_{2k}$ is basic and the new row and column are different from all rows and columns of $C_{2k}$ (again since $k - 2 \geq 1$).
Furthermore, $\Rreal(M) = \Rreal(C_{2k}) = k+1$, since the new row and column are a linear combination of rows and columns of $C_{2k}$.
Finally, $\Rbin(M) = \Rbool(M) = 2k$  since the first $2k$ ones on the main diagonal of $M$ are an isolation set of size $2k$, and using
Lemma~\ref{sumoftworows} it is possible to partition the ones of $M$ into $2k$ monochromatic rectangles since the last row of $M$ is also a sum of two rows
(see Lemma~\ref{sumofrowaugmented}). That is, the ones on rows $1,k+1,2k+1$ can be partitioned into $2$ rectangles,
and the ones of the remaining $2k-2$ rows can be partitioned into $2k-2$ additional rectangles (see Figure~\ref{circulant}).
\end{proof}

\section{Real Rank 3}
\label{Sec3}

In this section we consider $0,1$ matrices with real rank $3$.
The main observation is that any $0,1$ matrix $M$ with $\Rreal(M) = 3$  must contain a $3 \times 3$ sub-matrix of real rank $3$ which is equivalent to
one of the $7$ representatives  in Figure~\ref{fig:rank3}.
This allows us to partition the basic matrices of real rank $3$ into $7$ classes, analyze each class separately by considering only its representative,
and prove that it is not possible to augment any of these representatives into a $5 \times 5$  basic $0,1$ matrix with real rank $3$ (Lemma~\ref{lemma3}).
From this will follow Theorem~\ref{theoremLowerBound} for $d = 3$. These representatives also help us characterize the maximal size of
an isolation set according to the structure of the kernel of $M$, thus proving Theorem~\ref{Rank3Structure}.

\begin{figure}[htb!]
\begin{center}
\begin{tabular}{cccc}
$ A_1 = \left(
  \begin{array}{ccc}
    1 & 0 & 0 \\
    0 & 1 & 0 \\
    0 & 0 & 1 \\
  \end{array}
\right) $ &
$ A_2 = \left(
  \begin{array}{ccc}
    1 & 0 & 0 \\
    0 & 1 & 0 \\
    0 & 1 & 1 \\
  \end{array}
\right) $
 &
 $ A_3 = \left(
  \begin{array}{ccc}
    1 & 0 & 0 \\
    0 & 1 & 0 \\
    1 & 1 & 1 \\
  \end{array}
\right) $
&
$A_4 = \left(
  \begin{array}{ccc}
    1 & 0 & 0 \\
    1 & 1 & 0 \\
    0 & 1 & 1 \\
  \end{array}
\right) $
\\ \\  \\
 $A_5 = \left(
  \begin{array}{ccc}
    1 & 0 & 0 \\
    1 & 1 & 0 \\
    1 & 1 & 1 \\
  \end{array}
\right) $
&
$ A_6 =  \left(
  \begin{array}{ccc}
    1 & 1 & 0 \\
    0 & 1 & 1 \\
    1 & 0 & 1 \\
  \end{array}
\right) $
&
$A_7 = \left(
  \begin{array}{ccc}
    1 & 1 & 0 \\
    0 & 1 & 1 \\
    1 & 1 & 1 \\
  \end{array}
\right) $
&
  \end{tabular}
\end{center}
\caption{The seven possible representatives of matrices of real rank $3$ and size $3 \times 3$.}
\label{fig:rank3}	
\end{figure}

The following lemma can be verified by a computer program, but it can also be proved using only algebraic and combinatorial arguments.
We include the full theoretical proof since the analysis is required for the proof of Theorem~\ref{Rank3Structure}.

\begin{lemma}
\label{lemma3}
 There does not exist a $5 \times 5$ basic $0,1$ matrix $M$ with  $\Rreal(M) = 3.$
\end{lemma}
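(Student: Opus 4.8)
The plan is to argue by contradiction: suppose $M$ is a $5\times 5$ basic $0,1$ matrix with $\Rreal(M)=3$. By the main observation above, $M$ contains a $3\times 3$ submatrix of real rank $3$ that is equivalent to one of $A_1,\dots,A_7$. Since equivalence allows transposition, and transposing $M$ changes neither its basic-ness, its real rank, nor the statement to be proved, I may permute the rows and columns of $M$ so that rows $1,2,3$ and columns $1,2,3$ form one fixed representative $B=A_i$ (this core is itself basic of rank $3$ by Lemma~\ref{basic_large_a}). Writing $M=\begin{pmatrix} B & T\\ L & R\end{pmatrix}$ with $L$ of size $2\times 3$, $T$ of size $3\times 2$, and $R$ of size $2\times 2$, the block $B$ is invertible, so $\Rreal(M)=\Rreal(B)=3$ forces the Schur complement to vanish, i.e. $R=L\,B^{-1}\,T$. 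Thus the bottom-right block is completely determined by $L$ and $T$, and the whole question becomes whether one can choose $0,1$ blocks $L$ and $T$ for which the forced block $R$ is again $0,1$.

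Next I would reduce basic-ness to a constraint on three-dimensional projections. Because columns $1,2,3$ are linearly independent, each extra row is determined by its restriction $u\in\{0,1\}^3$ to those columns (its coefficient vector is $uB^{-1}$), and two rows with the same restriction are equal; hence a legal extra row must have $u\neq 0$ and $u$ distinct from every row of $B$, leaving exactly $8-1-3=4$ candidate restrictions. Symmetrically, each extra column is determined by its restriction $w\in\{0,1\}^3$ to rows $1,2,3$, which must be nonzero and distinct from every \emph{column} of $B$, again $4$ candidates. For such choices the entry of $R$ contributed by extra row $u_a$ and extra column $w_b$ is exactly $u_a B^{-1} w_b$. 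Consequently a basic $5\times 5$ rank-$3$ extension of $B$ exists if and only if one can pick two distinct candidate row-restrictions $u_4,u_5$ and two distinct candidate column-restrictions $w_4,w_5$ for which all four scalars $u_a B^{-1} w_b$ with $a,b\in\{4,5\}$ lie in $\{0,1\}$.

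The crux is then a finite check, carried out one representative at a time. For each $B=A_i$ I would compute $B^{-1}$, list its four candidate row-restrictions and its four candidate column-restrictions, and form the $4\times 4$ Boolean table whose $(u,w)$ entry records whether $u B^{-1} w\in\{0,1\}$. A basic $5\times 5$ extension corresponds precisely to an all-ones $2\times 2$ submatrix of this table, so the lemma reduces to verifying that none of the seven tables contains such a submatrix. The main obstacle is exactly this bookkeeping: the tables must be computed correctly, and in particular one must respect that the row-candidate set (which excludes the rows of $B$) and the column-candidate set (which excludes the columns of $B$) are genuinely different whenever $B$ is not symmetric; using the wrong set spuriously manufactures a forbidden $2\times 2$ block. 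Each table is small enough to check by hand, which is why the lemma can also be confirmed by computer. Finally, I would record for each representative which single crossings $u_a B^{-1} w_b$ \emph{are} admissible, since this same data describes exactly which cores extend to $5\times 4$, $4\times 5$, or $4\times 4$ basic matrices, and hence supplies the structural information about kernels needed for Theorem~\ref{Rank3Structure}.
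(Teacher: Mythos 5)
Your reduction is correct, and it is essentially the paper's own argument in cleaner algebraic clothing: after placing a representative $B=A_i$ in the top-left corner, the paper likewise observes that every added row is a linear combination of the rows of $B$ and that the coefficients of that combination (your $uB^{-1}$) dictate which columns may be appended (this is its Lemma~\ref{sumofrowaugmented}); your Schur-complement identity $R=LB^{-1}T$ together with the criterion that all four crossings $u_aB^{-1}w_b$ lie in $\{0,1\}$ packages exactly the same constraints, and your candidate counts ($8-1-3=4$ row-restrictions and $4$ column-restrictions per representative) match the paper's explicit lists of nontrivial linear combinations. The ``if and only if'' you assert is also right: since $B$ is invertible, each extra row or column is determined by its restriction, so basic-ness of the $5\times 5$ extension does reduce to nonzeroness and distinctness of the restrictions plus the four crossing entries being in $\{0,1\}$.

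The genuine gap is that your write-up stops exactly where the content of the lemma lives: you never compute the seven $4\times 4$ admissibility tables, and you never verify that none of them contains an all-ones $2\times 2$ submatrix. Everything up to that point is routine linear algebra that would be equally valid if the lemma were false; the lemma is \emph{equivalent} to the outcome of that finite check, so a proof must actually exhibit it rather than announce that it ``is small enough to check by hand.'' By contrast, the paper's seven cases carry out precisely this verification, listing for each $A_i$ the admissible combinations with their coefficients and deriving the contradictions, and it economizes by deferring to an earlier case whenever an added row creates a smaller-indexed representative as a submatrix (e.g.\ extending $A_3$ by $(0,0,1)$ produces $A_1$), a shortcut your flat table-check forgoes. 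For what it is worth, the check does come out as you predict --- e.g.\ for $B=A_1=I_3$ one has $uB^{-1}w=u\cdot w$, the admissible crossings among $(1,1,0),(1,0,1),(0,1,1),(1,1,1)$ are exactly the three pairs of distinct weight-two vectors, and no two candidate rows share two admissible columns, matching the paper's Case 1 --- but as submitted, your argument is a correct reduction plus a promissory note, not a complete proof.
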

\begin{proof}
Assume by contradiction that there exists such a matrix $M$.
Then $M$ has a sub-matrix of size $3 \times 3$ and real rank $3$, which is equivalent to one of the seven sub-matrices $A_i$ in Figure~\ref{fig:rank3}.
It is easy to verify that the matrices $A_i$, $1 \leq i \leq 7$, are the only $7$ possible representatives of size $3 \times 3$ and real rank $3$,
by considering all options of the possible number of ones in each row and column and making sure that the three rows and columns are independent.

We now show that it is not possible to extend any of these seven representatives into a basic $0,1$ matrix $M$ of size $5 \times 5$ with $\Rreal(M) = 3$.
We first augment each matrix $A_i$ with two additional rows  which are a linear combination of the existing rows of $A_i$,
and then show that it is not possible to add two additional distinct non-zero columns to the augmented sub-matrix without increasing the real rank.
By Lemma~\ref{zeroequalrow} if we augment $A_i$ with a row which is identical to one of the rows of $A_i$ or augment $A_i$ with an all zero row,
and then add columns to this augmented matrix, the resulting matrix, if basic, will have a strictly larger real rank than $A_i$.
Thus, any row added to $A_i$ should be a non-trivial linear combination of at least two rows of $A_i$.
Since the rows of $A_i$ are a base, they span all $0,1$ row vectors, but we explicitly find  the possible non-trivial linear combinations of the rows of $A_i$,
since the coefficients in each combination determine the columns that can be added to the augmented matrix (see Lemma~\ref{sumofrowaugmented}).

{\bf Case 1:}
The only possible linear combinations  of two or more rows of $A_1$ are a simple sum and give the following row vectors: $(1,1,0), (0,1,1), (1,0,1), (1,1,1)$.
Consider first the matrix $A'_1$ achieved by extending  $A_1$ with the two rows: $(1,1,0), (0,1,1)$,
and  assume by contradiction that $A'_1$ can be augmented with two additional columns $X_1,X_2$, so that the resulting $5 \times 5$ matrix is basic with real rank $3$.

First note, that for $i = 1,2$, there is at least one $1$ in the first three positions of $X_i$.
Otherwise, $X_i$ is the all zero vector as the last two positions will also be a zero by Lemma~\ref{sumofrowaugmented}, and thus the resulting matrix is not basic.
Furthermore, $X_i$ has at least $2$ ones in the first three positions, otherwise, these three positions are identical to one
of the columns of $A_1$, and then $X_i$ is identical to one of the columns of $A'_1$ (again using Lemma~\ref{sumofrowaugmented}).
Next, recall that the non-zero coefficients of the linear combinations are $1$, that is
$(1,1,0) = 1 \cdot(1,0,0) + 1\cdot(0,1,0)$ and $(0,1,1) = 1\cdot (0,1,0) + 1 \cdot (0,0,1)$.
Thus, if for example the first row of $A_1$ is extended with a pair of elements $x,y$,
such that $x = 1$ or $y = 1$, then the extension of the second row of $A_1$ cannot contain a $1$ in the same position, and similarly, for the second and third row of $A_1$.
But, this means that we can only extend $A'_1$ with one column, the column $(1,0,1,1,1)^T$, and we get a contradiction.

A similar contradiction is achieved if we extend $A_1$ with the remaining possibilities of two rows, each with two ones.
Finally, consider the case that $A_1$ is extended with a row with two ones $(1,1,0)$ and a row with three ones.
Again, the coefficients of the linear combinations are $1$, and here since row $(1,1,1)$ is a linear combination of all three rows of $A_1$,
then it is possible to have only one $1$ in the first three positions of any new column added.
But then there are identical columns in the augmented matrix and we get a contradiction as required.

{\bf Case 2:}
The possible linear combinations of two or more rows of $A_2$ are:
\begin{eqnarray*}
  (0,0,1) &=& -1\cdot (0,1,0) + 1 \cdot (0,1,1), \\
  (1,1,0) &=& 1\cdot (1,0,0) + 1 \cdot (0,1,0), \\
  (1,0,1) &=&  1\cdot (1,0,0) -1\cdot (0,1,0) + 1 \cdot (0,1,1),\\
  (1,1,1) &=& 1\cdot (1,0,0) + 1 \cdot (0,1,1).
\end{eqnarray*}
If we extend $A_2$ with row $(0,0,1)$ we get a matrix that contains $A_1$ as a sub-matrix and we can continue as in Case 1.
If we extend $A_2$ with both rows $(1,1,0)$ and  $(1,0,1)$, then again, we cannot add a column which is identical in the first three positions to one of the columns of $A_2$.
We also cannot add a column that has two ones in the first two positions, since row $(1,1,0)$ is the sum of the first two rows of $A_2$.
Similarly, we cannot add a column whose first three positions are $(1,0,1)^T$, since the coefficients in the linear combination which results in row $(1,0,1)$ are $1,-1,1$,
and thus we will get the value $2$.
Finally, we cannot add a column whose first three positions are $(0,1,0)^T$ since then the matrix $M$ will contain the matrix $A_1$ and we can continue as in Case 1.
Thus, there is no way to complete $A_2$, when extended with rows $(1,1,0),(1,0,1)$, into a basic $5 \times 5$ matrix with rank $3$ as required.

Next we try to extend $A_2$ with rows $(1,1,0), (1,1,1)$.
A similar argument shows that there is no way to add columns to the augmented matrix in this case without getting two identical columns or values that are not $0,1$.
Extending $A_2$ with rows $(1,0,1), (1,1,1)$ results in a similar contradiction.

{\bf Case 3:}
The  possible linear combinations of two or more rows of $A_3$ are:
\begin{eqnarray*}
  (1,1,0) &=& 1\cdot (1,0,0) + 1 \cdot (0,1,0), \\
  (0,1,1) &=& -1\cdot (1,0,0) + 1 \cdot (1,1,1), \\
  (1,0,1) &=& -1\cdot (0,1,0) + 1 \cdot (1,1,1) \\
  (0,0,1) &=& -1\cdot (1,0,0) - 1 \cdot (0,1,0) + 1 \cdot(1,1,1).
\end{eqnarray*}
Extending $A_3$ with $(0,0,1)$ gives us $A_1$ and we are done. Extending $A_3$ with $(1,0,1)$ or with $(0,1,1)$ gives us $A_2$ up to a permutation of rows and columns, and we are done.
Thus, we can only extend $A_3$ with the row $(1,1,0)$. But then we do not get a $5 \times 5$ matrix and again we are done.

{\bf Case 4:}
The  possible linear combinations of two or more rows of $A_4$ are:
\begin{eqnarray*}
 (0,1,0) &=& -1\cdot (1,0,0) + 1 \cdot (1,1,0), \\
  (1,1,1) &=& 1\cdot (1,0,0) + 1 \cdot (0,1,1), \\
  (0,0,1) &=& 1\cdot (1,0,0) - 1 \cdot (1,1,0) + 1 \cdot(0,1,1),\\
  (1,0,1) & = & 2 \cdot (1,0,0) - 1 \cdot (1,1,0) + 1 \cdot (0,1,1).
\end{eqnarray*}
We cannot extend $A_4$ with $(0,1,0)$ or with $(0,0,1)$ since then we get $A_2$ as a sub-matrix.
If we extend $A_4$ with $(1,0,1),(1,1,1)$, then, as in the previous cases we cannot add two distinct non-zero columns.

{\bf Case 5:}
The  possible linear combinations of two or more rows of $A_5$ are:
\begin{eqnarray*}
  (0,1,0) &=& -1\cdot (1,0,0) + 1 \cdot (1,1,0),  \\
  (0,1,1) &=& -1\cdot (1,0,0) + 1 \cdot (1,1,1), \\
  (0,0,1) &=& -1\cdot (1,1,0) + 1 \cdot (1,1,1), \\
  (1,0,1) &=& 1\cdot (1,0,0) - 1 \cdot (1,1,0) + 1 \cdot(1,1,1).
\end{eqnarray*}
Extending $A_5$ with $(0,1,0)$ gives $A_3$ as a sub-matrix; extending $A_5$ with $(0,0,1)$ gives $A_2$ as a sub-matrix, and extending $A_5$ with $(0,1,1)$ gives $A_4$ as a sub-matrix.
Thus, we can only extend $A_5$ with $(1,0,1)$ and again we are done.

{\bf Case 6:}
The possible linear combinations of two or more rows of  $A_6$ are:
\begin{eqnarray*}
  (1,1,1) &=& \frac{1}{2}\cdot (1,1,0) + \frac{1}{2} \cdot (0,1,1) + \frac{1}{2}\cdot (1,0,1), \\
  (1,0,0) &=& \frac{1}{2}\cdot (1,1,0)  -\frac{1}{2} \cdot (0,1,1) + \frac{1}{2}\cdot (1,0,1), \\
  (0,0,1) &=& -\frac{1}{2}\cdot (1,1,0) + \frac{1}{2} \cdot (0,1,1) + \frac{1}{2}\cdot (1,0,1), \\
  (0,1,0) &=& \frac{1}{2}\cdot (1,1,0) + \frac{1}{2} \cdot (0,1,1) - \frac{1}{2}\cdot (1,0,1).
\end{eqnarray*}
If we extend $A_6$ with one of $(1,0,0), (0,1,0), (0,0,1)$ we get a matrix that contains $A_4$ we are done. Thus, we can only extend $A_6$ with $(1,1,1)$ and again we are done.

{\bf Case 7:}
The possible linear combinations of two or more rows of $A_7$ are:
\begin{eqnarray*}
  (0,0,1) &=& -1\cdot (1,1,0)  + 1 \cdot (1,1,1), \\
  (1,0,0) &=&  -1  \cdot (0,1,1) + 1 \cdot (1,1,1),\\
  (0,1,0) & =& 1\cdot (1,1,0)  + 1 \cdot (0,1,1) - 1 \cdot (1,1,1),\\
  (1,0,1) &=& -1\cdot (1,1,0)  -1 \cdot (0,1,1) + 2 \cdot (1,1,1).
\end{eqnarray*}
Extending $A_7$ with $(0,0,1)$ or with $(1,0,0)$ gives us a matrix that contains $A_4$ as a sub-matrix and we can continue as in case 4. Extending $A_7$ with $(0,1,0)$
gives us a matrix that contains $A_5$ as a sub-matrix and we can continue as in case 5, and thus we can extend $A_7$ only with $(1,0,1)$  and we are done.
\end{proof}


We can now prove Theorem~\ref{Rank3Structure} which states that for a $0,1$ matrix $M$ with $\Rreal(M) = 3$, it holds that $\Rbin(M) = \Rbool(M) = i(M)$,
unless the kernel of $M$ is $A_7$ (in this case, $\Rbin(M) = 3, \Rbool(M) = i(M) = 2$).
Furthermore,  $\Rbin(M) = \Rbool(M) = i(M) = 4$ if and only if the kernel of $M$ includes the $4 \times 4$ circulant matrix $C_4$.

\begin{proof}[of Theorem~\ref{Rank3Structure}]
Let $M'$ be the kernel of $M$. Since $\Rreal(M') = 3$ then $M'$ includes a sub-matrix of size $3 \times 3$ which is equivalent to
one of the seven representatives $A_i$ in Figure~\ref{fig:rank3}.

Assume first that $M'$ has at most $3$ rows or at most $3$ columns. Therefore, $\Rbin(M') \leq 3$.
In all cases, but for the matrix $A_7$, the ones on the diagonal of $A_i$ are an isolation set of size $3$,
and thus, for these cases, the binary and Boolean rank of $M'$ are $3$.
If $M' = A_7$, then  $\Rbin(M') = 3$, but  $\Rbool(M') = i(M') = 2$.
Assume now that $M'$ contains $A_7$ as a sub-matrix, but has at least one more distinct row (a similar argument holds if $M'$ has at least one more column).
We show that in this case $M'$ has an isolation set of size $3$ as claimed. The additional row must be a linear combination of the rows of $A_7$.
As in the proof of Lemma~\ref{lemma3}, the possible linear combinations of rows of $A_7$ are: $(0,0,1), (1,0,0), (0,1,0),(1,0,1)$.
Adding any one of the first two rows to $A_7$ gives us $A_4$ as a sub-matrix, adding $(0,1,0)$ gives us $A_5$ as a sub-matrix,
and adding $(1,0,1)$ gives us $A_6$ as a sum-matrix. In all cases $M'$ contains an isolation set of size $3$ as claimed, and thus its binary and Boolean rank are $3$.

Now assume that $M'$ has at least $4$ rows and at least $4$ columns.
By Lemma~\ref{lemma3} there does not exist a $5 \times 5$ basic $0,1$ matrix with real rank $3$.
Therefore, we can assume that $M'$ has $4$ distinct rows and at least $4$ columns (the case of $4$ columns and at least $4$ rows is treated similarly).
Again $M'$ includes one of the matrices $A_i$ as a sub-matrix.
We consider for each matrix $A_i$ the possible ways to add a row and columns to $A_i$, which are linear combinations of rows and columns of $A_i$,
as described in the proof of Lemma~\ref{lemma3}.
Recall that if the additional row in $M'$  is the sum of two or three rows in $A_i$ then its binary rank is $3$ by Lemma~\ref{sumoftworows},
and since $A_i$ has an isolation set of size $3$ for $1 \leq i \leq 6$, we are done in this case.
Thus, it is enough to consider for each $A_i$, $1 \leq i \leq 6$, the case of adding a row which is not the sum of rows of $A_i$.
For $A_7$ we have to consider also the option of adding a row which is the sum of rows in $A_7$.

{\bf Case 1:} The only non-trivial linear combinations of rows of $A_1$ are sums of two or three rows of $A_1$, and we are done.

{\bf Case 2:} The only non-trivial linear combination of rows of $A_2$ which does not give a row which is a sum of rows is:
 $$(1,0,1) =  1\cdot (1,0,0) -1\cdot (0,1,0) + 1 \cdot (0,1,1).$$
 There are two ways to add a distinct column which corresponds to this linear combination:

$$ M_1 = \left(
  \begin{array}{ccc|c}
    1 & 0 & 0 & 1\\
    0 & 1 & 0 & 1\\
    0 & 1 & 1 & 0\\ \hline
    1 & 0 & 1 & 0 \\
  \end{array}
\right)   \;\;\;\;\;\;\;\;
M_2 = \left(
  \begin{array}{ccc|c}
    1 & 0 & 0 & 1\\
    0 & 1 & 0 & 1\\
    0 & 1 & 1 & 1\\\hline
    1 & 0 & 1 & 1 \\
  \end{array}
\right) $$

The matrix $M_1$ is just a permutation of the rows and columns of the circulant matrix $C_4$. It has binary rank $4$ and an isolation set of size $4$.
The matrix $M_2$ has binary rank $3$ since the last column we added is a sum of the first two columns, and it has an isolation set of size $3$ as claimed.
Of course, it is possible to add both columns, but then we get an augmented matrix of size $4 \times 5$ which includes $M_1$ as a sub-matrix, and the same argument holds.

{\bf Case 3:} The only non-trivial linear combination of the rows of $A_3$ which does not give a row which is a sum of rows is:
$$(0,0,1) = -1\cdot (1,0,0) - 1 \cdot (0,1,0) + 1 \cdot(1,1,1).$$
There is no way to add a column to $A_3$ when augmented with the row $(0,0,1)$ with the coefficients of this linear combination.

{\bf Case 4:} There are two non-trivial linear combinations of rows of $A_4$ which do not give a row which is a sum of rows.
If we extend $A_4$ with the row:
 $$(0,0,1) = 1\cdot (1,0,0) - 1 \cdot (1,1,0) + 1 \cdot(0,1,1),$$
 then the resulting matrix is as in Case 2 up to a permutation of the rows and columns.
 If we extend $A_4$ with:
 $$  (1,0,1)  =  2 \cdot (1,0,0) - 1 \cdot (1,1,0) + 1 \cdot (0,1,1),$$
 then there is no way to add a distinct non-zero column which is consistent with the coefficients of this combination.

{\bf Case 5:} The only non-trivial linear combination of rows of $A_5$ which does not give a row which is a sum of rows  is:
$$ (1,0,1) = 1\cdot (1,0,0) - 1 \cdot (1,1,0) + 1 \cdot(1,1,1).$$
 There are two ways to add a column which corresponds to this linear combination:

$$ M_1 = \left(
  \begin{array}{ccc|c}
    1 & 0 & 0 & 1\\
    1 & 1 & 0 & 1\\
    1 & 1 & 1 & 0\\ \hline
    1 & 0 & 1 & 0 \\
  \end{array}
\right)  \;\;\;\;\;\;\;\;
M_2 = \left(
  \begin{array}{ccc|c}
    1 & 0 & 0 & 1\\
    1 & 1 & 0 & 0\\
    1 & 1 & 1 & 0\\\hline
    1 & 0 & 1 & 1 \\
  \end{array}
\right) $$

In both cases the resulting matrix has binary rank $3$ since there exist two columns whose sum equals a third column, and in both cases there is an isolation set of size $3$.
If we add both columns to $A_5$ we get a matrix which includes $C_4$ as a sub-matrix.

{\bf Case 6:} In this case there are four ways to add a row which is a linear combination of the rows of $A_6$ as specified in Lemma~\ref{lemma3}.
However, there is no way to add a distinct non-zero column which is consistent with the coefficients of these combinations.

{\bf Case 7:} The matrix $A_7$ is the only matrix of the seven matrices $A_i$ which has a maximal isolation set of size $2$ and not $3$.
Therefore, in this case we consider all possible linear combinations of rows of $A_7$, including combinations which give us a row that is a sum of other rows,
such as the following two combinations:
\begin{eqnarray*}
  (0,0,1) &=& -1\cdot (1,1,0)  + 1 \cdot (1,1,1), \\
  (1,0,0) &=&  -1  \cdot (0,1,1) + 1 \cdot (1,1,1).
\end{eqnarray*}
If we add the row $(0,0,1)$ to $A_7$ there is one possibility to add a column which is consistent with the coefficients of this linear combination:

$$  \left(
  \begin{array}{ccc|c}
    {\bf 1} & 1 & 0 & 0\\
    0 &  {\bf 1} & 1 & 0\\
    1 & 1 & 1 & 1\\\hline
    0 & 0 & 1 &  {\bf 1} \\
  \end{array}
\right) $$
The binary rank of this matrix is $3$ and the $1$s in bold are an isolation set of size $3$ as claimed. A similar argument holds when we augment $A_7$ with the row $(1,0,0)$.

If we augment $A_7$ with the row $(1,0,1) =  -1\cdot (1,1,0)  -1 \cdot (0,1,1) + 2 \cdot (1,1,1)$, which is a linear combination of all three rows of $A_7$,
it is easy to verify that it is not possible to add a new column to the resulting matrix which is consistent with the coefficients of this combination.
If we augment $A_7$ with the row $ (0,1,0)  = 1\cdot (1,1,0)  + 1 \cdot (0,1,1) - 1 \cdot (1,1,1)$ then we can extend the augmented matrix with either or both of the columns
$(0,1,0,1)^T, (1,0,0,1)^T$. If we extend it with only one of these two columns then it is easy to verify that the resulting matrix has an isolation set, binary and Boolean rank of size $3$.
If we extend the matrix with both columns, we get the following matrix which has an isolation set of size $4$ in bold and contains $C_4$ as a sub-matrix.
$$  \left(
  \begin{array}{ccc|cc}
    {\bf 1} & 1 & 0 & 0 & 1 \\
    0 &  1 & 1 & {\bf 1} & 0  \\
    1 & 1 & {\bf 1} & 0 & 0 \\\hline
    0 & 1 & 0 &  1 & {\bf 1}  \\
  \end{array}
\right) $$
The theorem follows.
\end{proof}


\begin{theorem}
\label{theo3}
For any $0,1$ matrix $M$ with $\Rreal(M) = 3$ it holds that $2 \leq \Rbool(M), i(M) \leq 4$ and $3 \leq \Rbin(M) \leq 4$.
\end{theorem}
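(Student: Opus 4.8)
The plan is to derive this statement as a direct consequence of the structural results already in hand, so that no new casework is required. For the lower bounds, the inequality $\Rbool(M), i(M) \geq 2$ is exactly Lemma~\ref{RealBool34} specialized to $d = 3$, while $\Rbin(M) \geq 3$ follows immediately from the universal inequality $\Rreal(M) \leq \Rbin(M)$ together with the hypothesis $\Rreal(M) = 3$.

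For the upper bounds, the key observation is that all three quantities $\Rbin$, $\Rbool$, and $i$ are invariant under passing to the kernel $M'$ of $M$: deleting all-zero rows or columns removes no ones, and deleting a duplicate of an already-present row or column changes neither a partition (or cover) into monochromatic rectangles nor a maximal isolation set. Hence it suffices to bound these quantities for $M'$. By Theorem~\ref{Rank3Structure}, the kernel $M'$ has at most $4$ rows or at most $4$ columns; say it has at most $4$ rows (the other case follows by transposing, which preserves all three functions). A matrix with at most $4$ rows has binary rank at most $4$, since partitioning its ones one row at a time produces at most $4$ monochromatic rectangles of ones, each being a single row intersected with the columns in which it has a one. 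Therefore $\Rbin(M) = \Rbin(M') \leq 4$.

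Finally, I would chain the standard inequalities $i(M) \leq \Rbool(M) \leq \Rbin(M)$ — the first because no two ones of an isolation set can lie in a common monochromatic rectangle, the second because a partition is in particular a cover — to conclude $\Rbool(M) \leq 4$ and $i(M) \leq 4$ from $\Rbin(M) \leq 4$. Combining these inequalities with the lower bounds above gives the theorem. The only delicate point is verifying the invariance under the kernel operation and using the correct direction of the chain $i(M) \leq \Rbool(M) \leq \Rbin(M)$; everything else is simply quoting Theorem~\ref{Rank3Structure} and Lemma~\ref{RealBool34}, whose proofs carry the genuine combinatorial content.
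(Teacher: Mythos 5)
Your proposal is correct and follows essentially the same route as the paper: the paper's proof also reduces to basic matrices (your kernel reduction), obtains the bound of at most $4$ rows or columns from Lemma~\ref{lemma3} via Lemma~\ref{basic_large} (the fact you quote as the first bullet of Theorem~\ref{Rank3Structure}, which rests on the same lemma and involves no circularity), and then combines the trivial row bound on $\Rbin$ with Lemma~\ref{RealBool34} and $\Rreal(M)\leq\Rbin(M)$ exactly as you do. Your version merely makes explicit two steps the paper leaves implicit, namely the invariance of $\Rbin$, $\Rbool$, and $i$ under the kernel operation and the chain $i(M)\leq\Rbool(M)\leq\Rbin(M)$, both of which you justify correctly.
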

\begin{proof}
It is enough to consider basic matrices.
By Lemma~\ref{basic_large} if there exists a basic $0,1$ matrix $M$ of real rank $3$ and size $n \times m$, where $n,m \geq 5$,
then there exists a basic sub-matrix of size $5 \times 5$ and real rank $3$, in contradiction to Lemma~\ref{lemma3}.
Therefore, $M$ has at most $4$ rows or at most $4$ columns, and thus $\Rbool(M), \Rbin(M) \leq 4$.
The theorem follows by combining this upper bound with Lemma~\ref{RealBool34}
and the fact that $\Rreal(M) \leq \Rbin(M)$.
\end{proof}

\begin{corollary}
\label{C4}
The only $0,1$ matrix $M$ of size $4 \times 4$ with $\Rreal(M)  = 3$ and $\Rbin(M) = \Rbool(M) = i(M) = 4$ is the circulant matrix $C_4$.
\end{corollary}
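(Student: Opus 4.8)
The plan is to deduce the statement almost immediately from the characterization already established in Theorem~\ref{Rank3Structure}, after a short reduction to the basic case. The only genuine bookkeeping is organizing this reduction; the combinatorial heart of the argument has already been carried out in the proof of that theorem.

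First I would show that every $M$ meeting the hypotheses must be basic. Deleting duplicate or all-zero rows and columns leaves the binary rank unchanged, so $\Rbin(M) = \Rbin(M')$, where $M'$ is the kernel of $M$. If $M$ had two equal rows, an all-zero row, or the analogous defect among its columns, then $M'$ would have at most $3$ rows or at most $3$ columns; but a matrix with at most $3$ rows (respectively columns) has its ones partitionable into at most $3$ monochromatic rectangles, one per nonzero line, so $\Rbin(M') \leq 3$. This contradicts $\Rbin(M) = 4$. Hence all four rows of $M$ are distinct and nonzero, and likewise for the columns, so $M$ is basic and its kernel is $M$ itself.

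Next I would invoke the second item of Theorem~\ref{Rank3Structure}. Since $\Rreal(M) = 3$ and $\Rbin(M) = \Rbool(M) = 4$, that item gives that the kernel of $M$ --- which is $M$ --- contains $C_4$ as a sub-matrix. Because $C_4$ is itself a $4 \times 4$ matrix, a sub-matrix of the $4 \times 4$ matrix $M$ that equals $C_4$ must use all four rows and all four columns of $M$; thus $M$ is obtained from $C_4$ by a permutation of rows and columns, i.e.\ $M$ is equivalent to $C_4$. Conversely, Lemma~\ref{Realkplus1Bin2k} (with $k = 2$) confirms that $C_4$ satisfies $\Rreal(C_4) = 3$ and $\Rbin(C_4) = \Rbool(C_4) = i(C_4) = 4$, so $C_4$ does meet the hypotheses and is the unique such matrix up to equivalence.

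The only points needing care are the reduction to basic matrices and the observation that a $4 \times 4$ matrix containing the $4 \times 4$ matrix $C_4$ as a sub-matrix must coincide with $C_4$ up to reordering. The substantive case analysis --- that full binary rank in real rank $3$ forces a $C_4$ pattern in the kernel --- is precisely what Theorem~\ref{Rank3Structure} already supplies, so I would not expect any genuine obstacle beyond these routine observations.
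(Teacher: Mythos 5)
Your proof is correct and takes essentially the same approach as the paper's: the paper likewise dismisses non-basic $M$ by noting that an all-zero or duplicate row/column forces $\Rbin(M) \leq 3$, and settles the basic case by direct appeal to Theorem~\ref{Rank3Structure}. You simply spell out details the paper leaves implicit, such as the observation that a $4 \times 4$ matrix containing the $4 \times 4$ matrix $C_4$ as a sub-matrix must coincide with $C_4$ up to row and column permutations, and the verification via Lemma~\ref{Realkplus1Bin2k} that $C_4$ indeed attains the stated parameters.
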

\begin{proof}
If $M$ is basic, the corollary follows from Theorem~\ref{Rank3Structure}.
Otherwise, $M$ has an all-zero row or column or at least two identical rows or columns and thus, $\Rbin(M) \leq 3$.
\end{proof}

\section{Real Rank 4}
\label{App4}

In this section we prove Theorem~\ref{theoremLowerBound} for $d = 4$.
We first prove that any basic $0,1$ matrix $M$ of size $6 \times 6$ with real rank $4$ has a certain constraint on its rows and columns.
Specifically, in each such matrix, either $M$ has the $2$-sum property in which case the binary and Boolean rank are at most $5$,
or there exist two pairs of rows or two pairs of columns, such that the sum of each pair is the all one vector.
Furthermore, we show that any basic matrix of size $6 \times 6$ that does not have the $2$-sum property is equivalent to one of $4$ representatives,
and we also find the size of the maximal isolation sets of these representatives (see Lemma~\ref{characterize6}).
Then we prove that any basic $7 \times 7$ matrix obtained from these $4$ representatives by adding a row and column
without increasing the real rank, has the $2$-sum property.
From this we can deduce that any basic $7 \times 7$ matrix of real rank $4$ has binary rank at most $6$.
Finally, we show that there does not exist a basic $0,1$ matrix of size $8 \times 8$ and real rank $4$,
and can conclude that the binary and Boolean rank of any $0,1$ matrix with real rank $4$ is at most $6$.

\begin{figure}[htb!]
$$ A_1 =
 \left(
 \begin{array}{cccccc}
\cellcolor{blue!25}{\bf 1}& \cellcolor{blue!25}{\bf 1}& 0& 0 &\cellcolor{blue!25}{\bf 1}&    0 \\
0 & \cellcolor{yellow!25}{\bf 1}& \cellcolor{gray!25}{\bf 1} & 0 &\cellcolor{gray!25}{\bf 1}&  \cellcolor{yellow!25}{\bf 1} \\
0 & 0 & \cellcolor{purple!25}{\bf 1} & \cellcolor{purple!25}{\bf 1} &0 & \cellcolor{purple!25}{\bf 1} \\
\cellcolor{orange!25}{\bf 1} & \cellcolor{yellow!25}{\bf 1}& 0& \cellcolor{orange!25}{\bf 1} & 0 &  \cellcolor{yellow!25}{\bf 1} \\
0 & 0& \cellcolor{gray!25}{\bf 1} & 0& \cellcolor{gray!25}{\bf 1}& 0 \\
\cellcolor{orange!25}{\bf 1}& 0& \cellcolor{gray!25}{\bf 1}& \cellcolor{orange!25}{\bf 1} &\cellcolor{gray!25}{\bf 1} &    0 \\
 \end{array}
 \right)
 \;\;\;\;\;\;\;\;
 A_2 =
 \left(
 \begin{array}{cccccc}
\cellcolor{blue!25}{\bf 1} & \cellcolor{blue!25}{\bf 1}& \cellcolor{purple!25}{\bf 1}& 0& 0& \cellcolor{purple!25}{\bf 1} \\
0 &\cellcolor{yellow!25}{\bf 1} &0 &\cellcolor{yellow!25}{\bf 1} &0 &\cellcolor{yellow!25}{\bf 1} \\
0 & 0 & \cellcolor{purple!25}{\bf 1} &0 &0 &\cellcolor{purple!25}{\bf 1} \\
0 & 0 &0 &\cellcolor{orange!25}{\bf 1}& \cellcolor{orange!25}{\bf 1}& 0 \\
\cellcolor{gray!25}{\bf 1} & 0& \cellcolor{gray!25}{\bf 1}& 0 & \cellcolor{gray!25}{\bf 1} &0 \\
\cellcolor{blue!25}{\bf 1} & \cellcolor{blue!25}{\bf 1}& 0 &\cellcolor{orange!25}{\bf 1} &\cellcolor{orange!25}{\bf 1}& 0 \\
 \end{array}
 \right)
 \;\;\;\;\;\;\;\;
 A_3 =  \left(
 \begin{array}{cccccc}
\cellcolor{blue!25}{\bf 1}& 1& 1& 0& 0& 0\\
0& \cellcolor{blue!25}{\bf 1}& 1 &1 &0& 0\\
0 &0& \cellcolor{blue!25}{\bf 1}& 1& 1& 0\\
0 &0& 0& \cellcolor{blue!25}{\bf 1}& 1& 1\\
1 &0& 0& 0& \cellcolor{blue!25}{\bf 1}& 1\\
0& 1& 0& 1& 0& 1\\
 \end{array}
 \right) $$

\caption{There are four representatives of basic $0,1$ matrices of real rank $4$ and size $6 \times 6$
that do not have the $2$-sum property: the matrix $C_6$ and the three  matrices $A_1,A_2,A_3$ presented in the figure.
Matrices $A_1, A_2$  have binary rank $5$ and matrices $A_3, C_6$ have binary rank $6$.
The first $5$ ones on the main diagonal of matrices $A_1, A_2, A_3$, are a maximal isolation set of size $5$,
whereas  $C_6$ has an isolation set of size $6$.}
\label{fig:rank4size6}	
\end{figure}

\begin{lemma}
\label{characterize6}
Let $M$ be a $0,1$ basic matrix of size $6 \times 6$ and $\Rreal(M) = 4$.
Then either $M$ has the $2$-sum property in which case $\Rbin(M),\Rbool(M) \leq 5$,
or $M$ is equivalent to $C_6$ or to one of the  $3$ matrices $A_1,A_2,A_3$ in Figure~\ref{fig:rank4size6}, and it holds:
\begin{itemize}
\item
If $M$ is one of $A_1, A_2$ then  $\Rbin(M) = \Rbool(M) = i(M) = 5$.
\item
If $M = A_3$ then $\Rbin(M) = \Rbool(M) = 6$  and  $i(M) = 5$.
\item
If $M = C_6$ then $\Rbin(M) = \Rbool(M) = i(M) = 6$.
\end{itemize}
\end{lemma}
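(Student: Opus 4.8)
The plan is to treat the two alternatives of the statement separately and then pin down the three quantities $\Rbin,\Rbool,i$ for each of the four exceptional matrices. I would first dispose of the $2$-sum case, which yields the upper bounds $\Rbin(M),\Rbool(M)\le 5$ almost immediately. If $M$ has three rows $R_1,R_2,R_3$ with $R_1+R_2=R_3$ (the column case is symmetric), then Lemma~\ref{sumoftworows} applied with $k=2$ partitions the ones of these three rows into $2$ monochromatic rectangles, while each of the remaining $3$ rows is itself a single rectangle of ones. This partitions all ones of $M$ into $2+3=5$ rectangles, so $\Rbin(M)\le 5$, and since $\Rbool(M)\le\Rbin(M)$, also $\Rbool(M)\le 5$.

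Next, assuming $M$ does not have the $2$-sum property, I would show that $M$ is equivalent to one of $C_6,A_1,A_2,A_3$. Since $\Rreal(M)=4$, there is a two-dimensional space of linear dependencies among the six rows and among the six columns. I would first establish the structural dichotomy announced in the section preamble, namely that the absence of the $2$-sum property forces two disjoint pairs of rows (or two disjoint pairs of columns) each summing to the all-ones vector, and then use this rigid constraint together with basicness to enumerate the possibilities. Concretely, I would run the pruning search of Section~\ref{IntroTechniques}: start from representatives of basic $4\times4$ real-rank-$4$ matrices and repeatedly adjoin rows and columns that are linear combinations of the existing ones, keeping $\Rreal=4$ by Lemma~\ref{sumofrowaugmented}, retaining only basic matrices and discarding every matrix with the $2$-sum property. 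Exactly four inequivalent $6\times 6$ matrices should survive, and one checks they are $C_6,A_1,A_2,A_3$.

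It then remains to compute $\Rbin,\Rbool,i$ for each representative. The lower bounds come from isolation sets: the first $5$ diagonal ones of $A_1,A_2,A_3$ and all $6$ diagonal ones of $C_6$ are isolation sets, giving $\Rbool\ge 5$ (respectively $\Rbool\ge 6$) and hence the same bounds for $\Rbin$. For the upper bounds, the explicit $5$-rectangle partitions drawn in Figure~\ref{fig:rank4size6} give $\Rbin(A_1),\Rbin(A_2)\le 5$, so $\Rbin=\Rbool=i=5$ for these two; covering each matrix by its $6$ rows gives $\Rbin\le 6$ for $A_3$ and $C_6$, and for $C_6$ the isolation set of size $6$ closes the gap to $\Rbin=\Rbool=i=6$.

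The main obstacle is $A_3$, where the claim $\Rbin(A_3)=\Rbool(A_3)=6$ lies strictly above $i(A_3)=5$, so the isolation-set method cannot certify it. Here I would argue directly that the $18$ ones of $A_3$ cannot be covered by $5$ monochromatic rectangles: every maximal all-ones rectangle of $A_3$ has at most $4$ ones, and the ones in the ``wrap-around'' positions such as $(5,1)$ lie only in rectangles of at most $3$ ones, so a short case analysis over the maximal rectangles (or, if a clean argument proves elusive, an exhaustive check) rules out any cover of size $5$. I would likewise verify $i(A_3)=5$ by checking that no six pairwise-isolated ones, one per row and one per column, exist. These two finite but delicate verifications for $A_3$, separating the covering number from the isolation number, are the crux of the lemma.
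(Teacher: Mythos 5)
Your proposal follows essentially the same route as the paper: the $2$-sum case via Lemma~\ref{sumoftworows} plus one rectangle per remaining row, the classification into $C_6,A_1,A_2,A_3$ by the pruned computer search of Section~\ref{IntroTechniques}, and isolation sets with explicit partitions for the rank values, with $A_3$ correctly identified as the crux requiring a separate finite argument that $\Rbin(A_3)=\Rbool(A_3)=6$ while $i(A_3)=5$. The only difference is in how that crux is executed: the paper anchors its case analysis on the five forced diagonal rectangles and the observation that the $1$ in position $(6,6)$ can only join the rectangle of the fourth diagonal one, which quickly strands a $1$ in row four, whereas you sketch casework over maximal rectangles (noting, correctly, that the size bound $5\cdot 4=20\ge 18$ alone cannot finish it) with an exhaustive check as fallback --- both are sound finite verifications.
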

\begin{proof}
The claim that any matrix $M$ which does not have the $2$-sum property is equivalent to $C_6$
or to one of the three matrices $A_1,A_2,A_3$ in Figure~\ref{fig:rank4size6},
is verified via a computer program which goes over all basic matrices of size $6 \times 6$ with real rank $4$,
while pruning the matrices examined as described in Section~\ref{IntroTechniques}.
The binary and Boolean rank of $A_1, A_2$ are $5$ as claimed, since each matrix has an isolation set of size $5$,
and its ones can be partitioned into $5$ monochromatic rectangles as shown in Figure~\ref{fig:rank4size6}.
By Lemma~\ref{Realkplus1Bin2k}, $\Rbin(C_6) = \Rbool(C_6) = i(C_6) = 6$ as claimed.

As for $A_3$, the first $5$ ones on the main diagonal of $A_3$ are an isolation set of size $5$,
and we now prove that it is not possible to partition the ones of $A_3$ into only $5$ monochromatic rectangles.
From this will follow that $\Rbin(A_3) = 6$.
Assume by contradiction that the ones of $A_3$ can be partitioned into five monochromatic rectangles $S_i$, $1 \leq i \leq 5$.
 Each of the first $5$ ones on the main diagonal should be in a different rectangle in this partition, since they are an isolation set.
The $1$ in position $(6,6)$ can only belong to $S_4$. Thus, the rectangles $S_i$ impose the following partial partition:
  $$A_3 =  \left(
 \begin{array}{cccccc}
\cellcolor{blue!25}{\bf 1}& 1& 1& 0& 0& 0\\
0& \cellcolor{yellow!25}{\bf 1}& 1 &1 &0& 0\\
0 &0& \cellcolor{purple!25}{\bf 1}& 1& 1& 0\\
0 &0& 0& \cellcolor{orange!25}{\bf 1}& 1& \cellcolor{orange!25}{\bf 1}\\
1 &0& 0& 0& \cellcolor{gray!25}{\bf 1}& \cellcolor{gray!25}{\bf 1}\\
0& 1& 0& \cellcolor{orange!25}{\bf 1}& 0& \cellcolor{orange!25}{\bf 1}\\
 \end{array}
 \right) $$
 But this means that the remaining uncovered $1$ on the fourth row cannot belong to any of the existing five rectangles and we get a contradiction.
 A similar argument shows that $\Rbool(M) = 6$ as well.

 Finally, we show that the maximal  isolation set in $A_3$ is of size $5$. Assume by contradiction that it has an isolation set $S$ of size $6$,
 and that the one in position $(6,6)$ is in $S$.
  Therefore, the only possibilities of a $1$ in the fifth row that can be added to the isolation set are either in position $(5,5)$ or in position $(5,1)$.
 If we add the $1$ in position $(5,5)$ to $S$ then there is no way to add a $1$ from the fourth row to the isolation set which does not belong to the same row or column as the
 two ones in $S$ or that does not belong to an all one $2\times 2$ sub-matrix, and we get a contradiction.
 Similarly, if we add to $S$ the $1$ in position $(5,1)$ then we must add to $S$ the following ones from rows $4$ through $2$:
  $$A_3 =  \left(
 \begin{array}{cccccc}
1& 1& 1& 0& 0& 0\\
0&  \cellcolor{blue!25}{\bf 1}& 1 &1 &0& 0\\
0 &0&  \cellcolor{blue!25}{\bf 1}& 1& 1& 0\\
0 &0& 0& 1& \cellcolor{blue!25}{\bf 1}& 1\\
\cellcolor{blue!25}{\bf 1} &0& 0& 0& 1 & 1\\
0& 1& 0& 1& 0& \cellcolor{blue!25}{\bf 1}\\
 \end{array}
 \right)
 $$
Now, since there is no way to add to $S$ a $1$ from the first row, we get a contradiction.

A similar contradiction is obtained if  the $1$ in position $(6,4)$ belongs to $S$.
Finally, assume that there exists an isolation set $S$ of size $6$ with the $1$ in position $(6,2)$.
Thus, $S$ must contain also the $1$ in position $(2,3)$ and this imposes the following structure on $S$ and, therefore, we cannot add any $1$ from the fifth row:
$$A_3 =  \left(
 \begin{array}{cccccc}
 \cellcolor{blue!25}{\bf 1}& 1& 1& 0& 0& 0\\
0& 1&  \cellcolor{blue!25}{\bf 1} &1 &0& 0\\
0 &0&  1& 1&  \cellcolor{blue!25}{\bf 1}& 0\\
0 &0& 0& 1& 1&  \cellcolor{blue!25}{\bf 1}\\
1 &0& 0& 0& 1 & 1\\
0& \cellcolor{blue!25}{\bf 1}& 0& 1& 0& 1 \\
 \end{array}
 \right)
 $$
 Hence, none of the $1$s in the sixth row can belong to an isolation set and we are done.
\end{proof}

\begin{corollary}
\label{C6}
The only  $0,1$ matrix $M$ of size $6 \times 6$ with $\Rreal(M)  = 4$ and $\Rbin(M) = \Rbool(M) = i(M) = 6$ is the circulant matrix $C_6$.
\end{corollary}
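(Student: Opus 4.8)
The plan is to obtain this as an immediate consequence of Lemma~\ref{characterize6}, in direct analogy with how Corollary~\ref{C4} followed from Theorem~\ref{Rank3Structure}. First I would reduce to basic matrices. If $M$ is not basic, then it contains an all-zero row or column or two identical rows or columns, so its kernel has at most $5$ rows or at most $5$ columns; since deleting zero and duplicate lines leaves the binary rank unchanged, and any matrix with at most $5$ nonzero rows can be partitioned into at most $5$ monochromatic rectangles by using one rectangle per row, this forces $\Rbin(M) \leq 5 < 6$. Hence every matrix with $\Rbin(M) = 6$ is necessarily basic, and I may assume $M$ is basic with $\Rreal(M) = 4$.

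With $M$ basic, I would invoke Lemma~\ref{characterize6} and split on its dichotomy. If $M$ has the $2$-sum property then $\Rbin(M) \leq 5$, which is incompatible with the hypothesis $\Rbin(M) = 6$; so $M$ must be equivalent to one of $C_6, A_1, A_2, A_3$. Reading off the rank data recorded in the lemma, $A_1$ and $A_2$ have $\Rbin = 5$, and $A_3$ has $i(A_3) = 5$, so none of these three can simultaneously satisfy $\Rbin(M) = \Rbool(M) = i(M) = 6$. Only $C_6$ attains all three values equal to $6$, leaving it (up to the equivalence given by row and column permutations and transposition) as the unique matrix with the stated properties.

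I do not expect any genuine obstacle here, since all of the combinatorial and computer-assisted work has already been carried out in the proof of Lemma~\ref{characterize6}; the one place deserving a line of care is the opening reduction, namely verifying that a matrix with a zero or repeated line cannot reach the full binary rank $6$, which is what rules out non-basic competitors to $C_6$.
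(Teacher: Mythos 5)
Your proof is correct and takes essentially the same approach the paper intends: Corollary~\ref{C6} is stated there without an explicit proof as an immediate consequence of Lemma~\ref{characterize6}, and the paper's proof of the analogous Corollary~\ref{C4} follows exactly your two steps — non-basic matrices are ruled out because a zero or duplicate line forces $\Rbin(M) \leq 5 < 6$, and for basic matrices one reads off from the lemma that $A_1,A_2$ have binary rank $5$ and $A_3$ has $i(A_3)=5$, leaving only $C_6$. Your care about the reduction to basic matrices and about uniqueness being up to row/column permutations (and transpose) matches the paper's conventions.
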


\begin{lemma}
Let $M$ be a $0,1$ basic matrix of size $6 \times 6$ and $\Rreal(M) = 4$.
If $M$ does not have the $2$-sum property then it has at least two pairs of rows or two pairs of columns whose sum is equal to the all one vector.
\end{lemma}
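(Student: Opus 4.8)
The plan is to leverage the characterization already established in Lemma~\ref{characterize6}. That lemma asserts that a basic $6\times 6$ matrix $M$ with $\Rreal(M)=4$ that lacks the $2$-sum property must be equivalent to one of exactly four matrices: $C_6$, $A_1$, $A_2$, $A_3$ of Figure~\ref{fig:rank4size6}. Since the hypotheses of the present lemma are identical to those of Lemma~\ref{characterize6}, I would begin by invoking this reduction, passing from an arbitrary such $M$ to these four concrete representatives.

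The key conceptual observation is that the target property---possessing at least two pairs of rows, or at least two pairs of columns, whose sum is the all one vector---is invariant under the equivalence relation used throughout the paper. Indeed, a pair of rows of a $0,1$ matrix sums to the all one vector precisely when the two rows are bitwise complementary; permuting rows or columns merely relabels such complementary pairs and preserves their number, while transposition interchanges the roles of rows and columns (turning two complementary row-pairs into two complementary column-pairs). Hence the disjunctive statement ``two pairs of rows or two pairs of columns'' is stable under equivalence, and it suffices to verify it for a single representative of each of the four classes.

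The final step is a direct inspection of the four representatives in the orientation given in Figure~\ref{fig:rank4size6}. For $C_6$, the pairs $\{\text{row }i,\ \text{row }i+3\}$ for $i=1,2,3$ are each complementary, so there are three complementary row-pairs; for $A_2$ one likewise finds three complementary row-pairs; and for $A_1$ and $A_3$ one finds at least two. In each case I would exhibit the explicit pairs and check that their coordinate-wise sum equals $(1,1,1,1,1,1)$, which yields the conclusion. I do not anticipate a genuine obstacle here: once Lemma~\ref{characterize6} is in hand, the remainder is a finite check over four $6\times 6$ matrices. The only point requiring a little care is the invariance argument above, which guarantees that the verification done for the listed orientations transfers to every equivalent $M$---in particular, that a matrix equivalent to one of $A_1,A_2,A_3,C_6$ through a transposition satisfies the claim via its columns rather than its rows.
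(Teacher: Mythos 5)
Your proposal is correct and follows essentially the same route as the paper, which likewise reduces via Lemma~\ref{characterize6} to the four representatives and then verifies the property by direct inspection (your exhibited complementary pairs do check out: rows $\{1,4\},\{2,5\},\{3,6\}$ for $C_6$ and $A_2$, rows $\{1,3\},\{4,5\}$ for $A_1$, and rows $\{1,4\},\{2,5\}$ for $A_3$). Your explicit note that the property is invariant under row/column permutations and transposition is a point the paper leaves implicit, but it is the same argument.
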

\begin{proof}
If $M$ does not have the $2$-sum property then it is equivalent to one of the $4$ representatives  in Lemma~\ref{characterize6}.
It is easy to verify that each of these $4$ representatives has the property claimed.
\end{proof}

Although there exists a basic $0,1$ matrix $M$ of size $7 \times 7$ such that $\Rreal(M) = 4$, we now show that the binary rank of each such matrix is at most $6$.

\begin{lemma}
\label{real47}
Let $M$ be a  basic $0,1$ matrix  of size $7 \times 7$ with $\Rreal(M) = 4$. Then $\Rbin(M) \leq 6$.
\end{lemma}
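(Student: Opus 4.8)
The plan is to prove the sharper statement that every basic $7\times 7$ matrix $M$ with $\Rreal(M)=4$ already has the $2$-sum property, from which $\Rbin(M)\le 6$ is immediate. Indeed, if three rows satisfy $R_a+R_b=R_c$, then Lemma~\ref{sumoftworows} partitions the ones in these three rows into $2$ monochromatic rectangles, while each of the remaining $4$ rows (all nonzero, since $M$ is basic) is itself a single rectangle of ones; this yields a partition into $2+4=6$ rectangles, and the same count works verbatim when the $2$-sum is among columns. To exhibit such a dependence, I would first pass to a smaller matrix: by Lemma~\ref{basic_large}, $M$ contains a basic $6\times 6$ submatrix $M'$ with $\Rreal(M')=4$, which I place in the top-left corner so that $M$ is $M'$ augmented by one extra column, one extra row, and one corner entry. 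By Lemma~\ref{characterize6}, $M'$ either has the $2$-sum property or is equivalent to one of $C_6,A_1,A_2,A_3$, splitting the argument into two cases.

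In the first case, suppose $M'$ has the $2$-sum property, say rows satisfying $R_a+R_b=R_c$ (the column subcase is symmetric). Let $N$ be the $6\times 7$ matrix formed by the first six rows of $M$, i.e.\ $M'$ with the seventh column adjoined. Since $M'$ is a submatrix of $N$ and $N$ is a submatrix of $M$, we have $\Rreal(N)=4=\Rreal(M')$, so Lemma~\ref{sumofrowaugmented} guarantees that the dependence $R_a+R_b-R_c=0$ is preserved when the column is added. Hence the corresponding rows of $M$, now of full length $7$, still satisfy $R_a+R_b=R_c$, so $M$ has the $2$-sum property and $\Rbin(M)\le 6$ by the reduction above.

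In the second case $M'$ has no $2$-sum property, so $M'$ is equivalent to one of the four representatives and $M$ is obtained from it by adjoining a single row and a single column without raising the real rank. To keep $\Rreal=4$, the adjoined row restricted to the six original columns must be a linear combination of the rows of $M'$, and the adjoined column a linear combination of the columns of $M'$; the coefficients of these combinations, together with the demands that all new entries lie in $\{0,1\}$ and that $M$ remain basic, sharply restrict the admissible extensions, exactly in the style of the row-extension analysis in the proof of Lemma~\ref{lemma3}. I would then enumerate, for each of $C_6,A_1,A_2,A_3$, every $0,1$-valued row arising as such a combination together with every consistent choice of new column and corner entry, and verify that each surviving completion has a row or a column equal to the sum of two others; this is precisely the step the paper discharges with computer assistance.

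The main obstacle is this second case. The classification of Lemma~\ref{characterize6} supplies only four starting shapes, but each admits many candidate row/column extensions, and one must confirm that \emph{every} basic rank-$4$ completion to a $7\times 7$ matrix genuinely carries the $2$-sum property, not merely that its rank can be bounded by some ad hoc partition. The bookkeeping — discarding extensions that violate the $0,1$ constraint or destroy basicness, and explicitly producing the summing triple in each valid case — is where the real effort lies; the general $2$-sum-to-rank-$6$ reduction and the lifting argument of the first case are routine by comparison.
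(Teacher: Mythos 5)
Your proposal matches the paper's proof essentially step for step: the paper likewise writes $M$ as a basic $6\times 6$ rank-$4$ matrix augmented by one column and one row, lifts the $2$-sum through the augmentation via Lemma~\ref{sumofrowaugmented} when that submatrix has the $2$-sum property, and otherwise invokes the four representatives of Lemma~\ref{characterize6} and checks by computer that every basic rank-$4$ completion to a $7\times 7$ matrix acquires the $2$-sum property. The finite enumeration you flag as the main burden is precisely the step the paper discharges with its computer program, so your outline is correct and complete modulo that same computation.
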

\begin{proof}
Any basic $0,1$ matrix $M$ of size $7 \times 7$ with $\Rreal(M) = 4$ is achieved by taking a basic $0,1$ matrix $A$ of size $6 \times 6$ with real rank $4$
and augmenting it with an additional column which is a linear
combination of the columns of $A$, and then adding a row which is a linear combination of the resulting matrix.
First note that if $A$ has two rows whose sum is another row in $A$ then the binary rank of $A$ is at most $5$.
By Lemma~\ref{sumofrowaugmented}, if we augment $A$ with a column which is a linear combination of the columns of $A$ then the resulting matrix has also two rows whose sum equals a third row.
Thus, the resulting matrix has binary rank at most $5$, and if we now augment it with an additional row we get binary rank at most $6$.
A similar argument holds if $A$ has two columns whose sum equals another column in $A$.

Therefore, assume now that $A$ does not have the $2$-sum property.
But then $A$ is equivalent to one of the four representatives in Lemma~\ref{characterize6}.
We ran a computer program which took each one of these representatives and tried to add to them a row and a column, such that the resulting $7 \times 7$ matrix is basic and has real rank $4$.
It turns out that each of the resulting matrices has the $2$-sum property, and thus their binary rank is at most $6$ as claimed.
\end{proof}

\begin{lemma}
\label{real478}
Let $M$ be a basic $0,1$ matrix with at most $7$ columns or at most $7$ rows, such that $\Rreal(M) = 4$. Then $\Rbin(M) \leq 6$.
\end{lemma}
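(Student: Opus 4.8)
The plan is to reduce to a single genuinely new case and then to partition \emph{by columns}. First, since $\Rbin$ and $\Rreal$ are preserved under transposition, after possibly transposing I may assume $M$ has at most $7$ columns (this is where the hypothesis ``at most $7$ rows or at most $7$ columns'' is used). The ones of any single column form one monochromatic rectangle, and these rectangles partition all the ones of $M$, so $\Rbin(M)$ is at most the number of columns; likewise $\Rbin(M)$ is at most the number of rows. Hence if $M$ has at most $6$ columns or at most $6$ rows then $\Rbin(M)\le 6$ already. So I may assume $M$ has exactly $7$ columns and at least $7$ rows, and if it has exactly $7$ rows the claim is Lemma~\ref{real47}. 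The only remaining case is a basic $n\times 7$ matrix with $n\ge 8$ and $\Rreal(M)=4$.

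The mechanism for this case is the following: if two of the seven columns of $M$ sum to a third column, then by Lemma~\ref{sumoftworows} the ones in those three columns partition into $2$ monochromatic rectangles, while each of the remaining $4$ columns contributes one rectangle, giving a partition into $2+4=6$ rectangles and hence $\Rbin(M)\le 6$. Crucially, this bound does not depend on $n$. So it suffices to produce a \emph{column $2$-sum} in $M$.

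To locate such a column $2$-sum I would pass to a small submatrix. By Lemma~\ref{basic_large}, $M$ contains a basic $7\times 7$ submatrix $N$ with $\Rreal(N)=4$. Since $N$ is obtained from $M$ by deleting rows without changing the real rank, the ``rows and columns reversed'' form of Lemma~\ref{sumofrowaugmented} shows that every linear dependence among the columns of $N$ lifts to the same dependence among the columns of $M$, and conversely any column dependence of $M$ restricts to $N$; thus $M$ has a column $2$-sum if and only if $N$ does. By Lemma~\ref{real47} the matrix $N$ has the $2$-sum property, and if this $2$-sum is among columns we are done.

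The main obstacle is exactly the case in which $N$ exhibits a $2$-sum only among its rows. A row $2$-sum lifts to a row $2$-sum of $M$, but with $n\ge 8$ a row $2$-sum is useless for a column-based partition, since it would only reduce a row partition from $n$ to $n-1$ rectangles. Hence the heart of the argument is to rule this out: I would show that a basic $7\times 7$ rank-$4$ matrix that extends to a $7$-column matrix with at least $8$ distinct rows must already contain a column $2$-sum. Equivalently, writing $W\subseteq\R^7$ for the $4$-dimensional row space of $M$, one must exhibit a vector of the $3$-dimensional space $W^{\perp}$ supported on three coordinates with sign pattern $(+1,-1,-1)$. I would establish this through the classification of $6\times 6$ rank-$4$ matrices without the $2$-sum property in Lemma~\ref{characterize6}, together with their admissible extensions, as a finite case check of the same computer-assisted flavour used for Lemma~\ref{real47}: the constraint of fitting at least $8$ distinct $0,1$ rows inside a $4$-dimensional space is precisely what forces the required column identity.
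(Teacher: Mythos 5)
Your proposal is correct and takes essentially the same route as the paper: both reduce to basic $n\times 7$ matrices with $n\ge 8$ and $\Rreal(M)=4$, establish a column $2$-sum by a finite computer-assisted check, partition the ones column-wise into $2+4=6$ monochromatic rectangles via Lemma~\ref{sumoftworows} (a bound independent of $n$), and lift the column dependence to matrices with more rows via Lemma~\ref{sumofrowaugmented}. The decisive enumeration you defer at the end is exactly what the paper's program performs, anchored at size $8\times 7$ rather than at $7\times 7$ extensions: it finds precisely two representatives of basic $8\times 7$ matrices of real rank $4$, each having two columns whose sum equals a third.
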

\begin{proof}
By Lemma~\ref{real47} it is enough to consider basic $0,1$ matrices with $7$ columns and at least $8$ rows.
The same argument holds for matrices with $7$ rows and at least $8$ columns.
Consider first basic matrices of size $8 \times 7$ and real rank $4$.
The computer program we ran found only two representatives of such matrices:
$$
\left(
  \begin{array}{ccccccc}
 0 & 1 & 1 & 1 & 0 &0 & 1 \\
1 & 0 & 1 & 1 & 0 & 1 & 0 \\
1 & 1 & 0 & 1 & 1 & 0 & 0 \\
1 & 1 & 1 & 1 & 0 & 0 & 0 \\
0 & 0 & 1 & 1 & 0 & 1 & 1 \\
0 & 1 & 0 & 1 & 1 & 0 & 1 \\
1 & 0 & 0 & 1 & 1 & 1 & 0 \\
0 & 0 & 0 & 1 & 1 & 1 & 1 \\
  \end{array}
\right)
\;\;\;\;\;\;\;\;\;\;
\left(
  \begin{array}{ccccccc}
0 & 1 & 1 & 1 & 0 & 1 & 0 \\
1 & 0 & 1 & 1 & 1 & 0 & 0 \\
1 & 1 & 0 & 1 & 0 & 0 & 1 \\
1 & 1 & 1 & 1 & 1 & 0 & 1 \\
0 & 1 & 1 & 0 & 1 & 0 & 1 \\
1 & 0 & 0 & 1 & 0 & 0 & 0 \\
0 & 0 & 1 & 0 & 1 & 0 & 0 \\
0 & 1 & 0 & 0 & 0 & 0 & 1 \\
  \end{array}
\right)
$$
Both matrices have two columns whose sum equals a third column, and since there are seven columns the binary rank is at most $6$ by Lemma~\ref{sumoftworows}.
Now, any basic $0,1$ matrix with $7$ columns and more than $8$ rows with real rank $4$,
must include one of these two matrices as a sub-matrix, and any additional rows are some linear combination of the existing rows of these two matrices.
Thus, by Lemma~\ref{sumofrowaugmented} these augmented matrices also have two columns whose sum equals a third column and their binary rank is at most $6$ as required.
\end{proof}

\begin{lemma}
\label{real48}
Let $M$ be a basic $0,1$ matrix of size $8 \times 8$. Then $\Rreal(M) > 4$.
\end{lemma}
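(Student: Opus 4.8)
The plan is to observe that the only real rank $\le 4$ that an $8\times 8$ basic matrix could possibly have is exactly $4$, and then to rule that case out using the two $8\times 7$ representatives already produced by the computer search in the proof of Lemma~\ref{real478}. First I would dispose of the lower ranks. If $\Rreal(M)\le 2$, then $M$ has at most $2^{2}=4$ distinct rows, contradicting the fact that a basic $8\times 8$ matrix has $8$ distinct rows. If $\Rreal(M)=3$, then by Lemma~\ref{basic_large} applied with $s=t=5$ the matrix $M$ would contain a basic $5\times 5$ sub-matrix of real rank $3$, contradicting Lemma~\ref{lemma3}. Hence it suffices to prove that there is no basic $8\times 8$ $0,1$ matrix with $\Rreal(M)=4$.

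Assume for contradiction that such an $M$ exists. By Lemma~\ref{basic_large} (with $s=8$, $t=7$) together with Lemma~\ref{basic_large_a}, $M$ contains a basic sub-matrix $A$ of size $8\times 7$ with $\Rreal(A)=4$, obtained by deleting a single column, and by the computer search in the proof of Lemma~\ref{real478} this $A$ is equivalent to one of the two $8\times 7$ representatives displayed there. Let $v$ be the deleted column, so that $M=(A\mid v)$ up to a permutation of columns. Since $\Rreal(M)=\Rreal(A)=4$ and the seven columns of $A$ already have rank $4$, appending $v$ cannot raise the rank, so $v$ lies in the column span of $A$. Moreover $v$ is a nonzero $0,1$ vector that differs from every column of $A$, because $M$ is basic. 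As equivalence only permutes rows and columns, it is enough to reach a contradiction for each representative $R$: I must show that the only $0,1$ vectors in the column span of $R$ are the seven columns of $R$ together with the zero vector.

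For each representative this reduces to a finite coordinatewise feasibility check. Fixing four independent columns of $R$ as a basis, every element of the column span is determined by four parameters, and each of the eight coordinates of a generic combination is an explicit linear form in them. For the first representative, for instance, one may take the columns indexed $1,2,3,4$ (the last being the all-ones column) as a basis and write the generic vector as $(b+c+e,\ a+c+e,\ a+b+e,\ a+b+c+e,\ c+e,\ b+e,\ a+e,\ e)$; imposing that every coordinate lie in $\{0,1\}$ forces $e\in\{0,1\}$, then $a,b,c\in\{-e,1-e\}$, and the remaining constraints force at most one of $a,b,c$ to differ from $0$, so the finitely many surviving solutions recover exactly the zero vector and the seven columns of $R$. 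The analogous computation for the second representative gives the same conclusion. In either case $v$ cannot be a nonzero $0,1$ vector distinct from the columns of $A$, so no basic $8\times 8$ matrix of real rank $4$ exists, which completes the proof. The main obstacle is precisely this final rigidity check: one must verify, for each representative, that its $4$-dimensional column span meets the cube $\{0,1\}^{8}$ only in its own columns, and this is where the special structure of the two representatives is essential.
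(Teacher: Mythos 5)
Your proof is correct, and it takes a genuinely different route from the paper's. The paper proves Lemma~\ref{real48} purely by computation: the search program of Section~\ref{IntroTechniques} grows rank-$4$ representatives from size $4\times 4$ upward and simply reports that none of the surviving matrices with at most $7$ rows or columns can be augmented to a basic $8\times 8$ matrix of real rank $4$. You instead stop the computer at the $8\times 7$ stage, reusing the two-representative classification already recorded in the proof of Lemma~\ref{real478}, and replace the final augmentation search by a hand-checkable rigidity argument: writing $M=(A\mid v)$ with $A$ a basic $8\times 7$ sub-matrix of rank $4$ (your appeal to Lemmas~\ref{basic_large} and~\ref{basic_large_a} is sound, as is the remark that within the $8\times 7$ shape equivalence reduces to row and column permutations), you show that the $4$-dimensional column span of each representative meets $\{0,1\}^8$ in exactly its seven columns and the zero vector, so no admissible $v$ exists. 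I verified your parametrization for the first representative, and your asserted ``analogous computation'' for the second also goes through: denoting its columns $c_1,\dots,c_7$, one has $c_2=c_6+c_7$, $c_3=c_5+c_6$, $c_4=c_1+c_6$, so taking $c_1,c_5,c_7,c_6$ as a basis the generic combination reads $(f,\,a+b,\,a+c,\,a+b+c,\,b+c,\,a,\,b,\,c)$, forcing $a,b,c,f\in\{0,1\}$ with at most one of $a,b,c$ nonzero, which yields precisely $0$ and the seven columns. What your route buys is transparency: the last step becomes verifiable by hand, and it explains structurally why no extension exists (the three column pairs summing to the all-ones column in the first representative, and the three $2$-sums through $c_6$ in the second, pin the span down). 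What it costs is that it still rests on the computer-found $8\times 7$ classification, so it is not computer-free. One further point in your favor: your explicit disposal of ranks at most $3$ --- via the $2^{\Rreal(M)}$ bound on distinct rows and Lemmas~\ref{basic_large} and~\ref{lemma3} --- makes rigorous something the paper's one-line proof leaves implicit, since its program only tracks matrices of rank exactly $4$.
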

\begin{proof}
The lemma is proved using a computer program as described in Section~\ref{IntroTechniques},
which starts with all representatives of matrices of size $4 \times 4$ and real rank $4$,
and then augments them to get larger and larger matrices of real rank $4$.
While there are still a few matrices with at most $7$ rows or at most $7$ columns and real rank $4$, as described in Lemmas~\ref{real47} and~\ref{real478},
none of these matrices can be augmented to a basic $0,1$ matrix of size $8 \times 8$ and real rank $4$.
\end{proof}

\begin{theorem}
For any $0,1$ matrix $M$ with  $\Rreal(M) = 4$ it holds that $3 \leq \Rbool(M), i(M) \leq 6$ and $4 \leq \Rbin(M) \leq 6$.
\end{theorem}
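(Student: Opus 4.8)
The plan is to prove this exactly as the analogue of Theorem~\ref{theo3} for real rank $4$: the lower bounds fall out immediately from lemmas proved earlier in this section and in the Preliminaries, while the upper bounds are assembled from the structural and computer-assisted results of this section. As always, it suffices to treat basic matrices, since passing to the kernel of $M$ (deleting zero and duplicate rows and columns) changes none of $\Rreal$, $\Rbin$, $\Rbool$, or $i$.

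For the lower bounds, I would first invoke Lemma~\ref{RealBool34} with $d=4$, which gives directly $\Rbool(M),i(M)\geq d-1 = 3$. The remaining bound $\Rbin(M)\geq 4$ is immediate from the universal inequality $\Rreal(M)\leq\Rbin(M)$ together with the hypothesis $\Rreal(M)=4$. So both lower bounds require no new work.

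For the upper bounds, the key step is to bound the dimensions of a basic matrix of real rank $4$. Combining Lemma~\ref{real48} (there is no basic $8\times 8$ matrix with real rank $4$) with Lemma~\ref{basic_large}, I would argue by contradiction: if a basic matrix $M$ with $\Rreal(M)=4$ had at least $8$ rows \emph{and} at least $8$ columns, then Lemma~\ref{basic_large} would produce a basic $8\times 8$ sub-matrix of real rank $4$, contradicting Lemma~\ref{real48}. Hence every basic $M$ with $\Rreal(M)=4$ has at most $7$ rows or at most $7$ columns, and Lemma~\ref{real478} then yields $\Rbin(M)\leq 6$. The two remaining upper bounds follow from the standard chain of inequalities: $\Rbool(M)\leq\Rbin(M)\leq 6$, and since an isolation set is a lower bound for the Boolean rank, $i(M)\leq\Rbool(M)\leq 6$.

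The main point to note is that this theorem is essentially a corollary of the heavy machinery already established earlier in the section. All the genuine difficulty — the exhaustive characterization of the $6\times 6$ representatives in Lemma~\ref{characterize6}, the case analysis extending them in Lemma~\ref{real47} and Lemma~\ref{real478}, and especially the computer-assisted nonexistence result of Lemma~\ref{real48} — has been discharged beforehand, so the only obstacle here is ensuring the reduction to basic matrices and the dimension bound are applied correctly; there are no new calculations to perform in the final assembly itself.
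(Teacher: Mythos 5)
Your proposal is correct and follows essentially the same route as the paper: reduce to basic matrices, combine Lemma~\ref{basic_large} with Lemma~\ref{real48} to bound one dimension by $7$, apply Lemma~\ref{real478} for $\Rbin(M)\leq 6$, and get the lower bounds from Lemma~\ref{RealBool34} and $\Rreal(M)\leq\Rbin(M)$. The only cosmetic difference is that the paper cites Lemma~\ref{real47} separately for the $7\times 7$ case, which is already subsumed by the statement of Lemma~\ref{real478} as you use it.
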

\begin{proof}
It is enough to consider basic $0,1$ matrices.
By  Lemma~\ref{basic_large} if there exists a basic matrix of real rank $4$ and size $n \times m$, where $n,m \geq 8$,
then there exists a basic sub-matrix of size $8 \times 8$ and real rank $4$, in contradiction to Lemma~\ref{real48}.
Hence, any basic $0,1$ matrix with real rank $4$ has at most $7$ rows or at most $7$ columns.
By Lemma~\ref{real47} the binary rank of any $7 \times 7$ basic matrix with real rank $4$ is at most $6$.
As to matrices with at most $7$ columns or rows, any such matrix has binary rank at most $6$ by Lemma~\ref{real478}.
Of course the binary rank is at least as large as the real rank.
Using Lemma~\ref{RealBool34}, we also have $3 \leq \Rbool(M), i(M) \leq \Rbin(M) \leq 6$.
\end{proof}

\bibliographystyle{plain}
\bibliography{realrank}

\end{document}